\newtheorem{thm}{Theorem}
\newtheorem{lmm}{Lemma}
\newtheorem{assumption}{Assumption}
\newtheorem{rmk}{Remark}
\newtheorem{prp}{Proposition}
\newtheorem{corollary}{Corollary}
\newcommand\reals[0]{\mathbb{R}}
\newcommand\eps[0]{\varepsilon}
\title{%\LARGE \bf
A Regularized and Smoothed Fischer-Burmeister Method for Quadratic Programming with Applications to Model Predictive Control
}
\author{Dominic Liao-McPherson$^{1}$, Mike Huang$^{2}$, and Ilya Kolmanovsky$^{1}$% <-this % stops a space
\thanks{$^{1}$D. Liao-McPherson and I. Kolmanovsky are with the University of Michigan, Ann Arbor. Email:\{dliaomcp,ilya\}@umich.edu}%
\thanks{$^{2}$M. Huang is with Toyota Motors North America R\&D, Ann Arbor, Michigan. Email:\{mike.huang\}@toyota.com}%
}
\begin{document}

% The paper headers
\markboth{Submitted to IEEE Transactions on Automatic Control}%
{Shell \MakeLowercase{\textit{et al.}}: Bare Demo of IEEEtran.cls for IEEE Journals}

% make the title area
\maketitle

\begin{abstract}
This paper considers solving convex quadratic programs (QPs) in a real-time setting using a regularized and smoothed Fischer-Burmeister method (FBRS). The Fischer-Burmeister function is used to map the optimality conditions of the quadratic program to a nonlinear system of equations which is solved using Newton's method. Regularization and smoothing are applied to improve the practical performance of the algorithm and a merit function is used to globalize convergence. FBRS is simple to code, easy to warmstart, robust to early termination, and has attractive theoretical properties, making it appealing for real-time and embedded applications. Numerical experiments using several predictive control examples show that the proposed method is competitive with other state of the art solvers.
\end{abstract}

\begin{IEEEkeywords}
Quadratic Programming, Semismooth, Model Predictive Control, Newton's Method, Non-smooth Analysis, Embedded Optimization, Convex Optimization
\end{IEEEkeywords}
\IEEEpeerreviewmaketitle

\section{Introduction}
% \IEEEPARstart{T}{his} demo file is intended to serve as a ``starter file''
% for IEEE journal papers produced under \LaTeX\ using
% IEEEtran.cls version 1.8b and later.

Real-time optimization has the potential to dramatically improve the capabilities of many systems. A key class of optimization problems in real-time applications involves convex quadratic programs (QPs); many practical problems in control, signal processing, machine learning and other domains can be posed as convex QPs \cite{boyd2004convex}. In recent years significant progress has been made developing fast, reliable algorithms for solving both QPs and more general optimization problems online. However, many applications, especially fast systems with limited computing power, remain challenging.

An important instance of a real-time, embedded optimization problem is the one in model predictive control (MPC) \cite{grune2011nonlinear} \cite{rawlings2009model} \cite{goodwin2006constrained}, where an optimal control problem over a receding horizon is solved during each sampling period. The optimal control problem for a discrete time linear-quadratic MPC formulation can be expressed as a convex QP. Furthermore, convex QPs form the basis for many algorithms used in nonlinear model predictive control (NMPC) such as sequential quadratic programming (SQP) \cite{boggs1995sequential}, and the real-time iteration scheme\cite{diehl2002real} which solves just one QP per timestep.

Interest in embedded optimization has motivated extensive research into fast, reliable solvers tailored for embedded systems. Algorithms and solvers specialized for MPC include the algorithms of Wang et al.\cite{wang2010fast} and FORCES \cite{domahidi2012efficient}, which are based on interior point methods, and qpOASES \cite{ferreau2008online} which is based on the active set method. Other algorithms, which are often used for MPC but which can solve more general convex QPs, include GPAD \cite{patrinos2014accelerated}, CVXGEN \cite{mattingley2012cvxgen}, NNLS \cite{bemporad2016quadratic}, piecewise smooth Newton's methods \cite{li1997new,patrinos2011global}, and PQP \cite{brand2011parallel}. In addition, solvers and algorithms for embedded second order cone programs, have begun to appear\cite{domahidi2013ecos} \cite{dueri2014automated}.

This paper considers the application of the Fischer-Burmeister (FB) function and a smoothing Newton's method to solving convex QPs. The necessary conditions for optimality are mapped to a system of non-smooth equations using the Fisher-Burmeister function. The equations are then smoothed and Newton's method is then applied to solve the resulting root finding problem. Regularization and a line search are added to control the numerical conditioning of the linear subproblems and enforce global convergence.

The regularized and smoothed FB algorithm, which we will refer to as FBRS (Fischer Burmeister Regularized Smoothed), has nice properties which make it attractive for embedded optimization. Firstly, FBRS displays global convergence and quadratic asymptotic convergence, properties it inherits from its nature as a damped generalized Newton's method. Secondly, it is simple to implement, a complete embeddable implementation is possible in under 100 lines of MATLAB code. Finally, it can be effectively warmstarted when solving sequences of related QPs, which is beneficial in many real-time optimization problems, including MPC.

Fischer-Burmeister (FB) functions in conjunction with both smoothing and semismooth Newton's method have been investigated in the past; a version of this algorithm, without smoothing or regularization, applied to general nonlinear programs was studied in \cite{facchinei1998regularity}. In addition, some smoothing methods for linear complementarity problems, which subsume box constrained convex QPs, have been proposed, see e.g., \cite{huang2004sub} and the references therein. However, this paper investigates its use for quadratic programming at a level of depth and detail not present in the previous literature. Furthermore, we consider its suitability for embedded use and perform numerical experiments demonstrating its applicability to predictive control. In addition, FBRS includes practical improvements such as regularization to handle ill-conditioned Jacobians.% and the use of the C-differential \cite{qi1996c} to ease the computation of generalized derivatives. 

% compare with other methods
FBRS also has some advantages when compared with other methods for solving convex QPs. Firstly, FBRS smooths the complementarity conditions in a manner similar to an interior point algorithm; however, unlike an interior point method, FBRS is locally quadratically convergent with no smoothing. This is the key property of FBRS which makes it attractive for solving sequences of related QPs. Secondly, FBRS has a faster convergence rate than first-order methods such as dual accelerated gradient projection (GPAD), the alternating direction method of multipliers (ADMM), or multiplicative update methods such as PQP, and does not require that the QP be strictly convex. Finally, in contrast with active set or primal barrier interior point methods, the initial guess need not be feasible.

% other work with FB and semi-smooth
% add references to smoothing Newton's methods
Fisher \cite{fischer1992special} used the eponymous function to map the Karush-Kuhn-Tucker (KKT) conditions for a nonlinear program to a nonlinear system of equations which is then solved with a non-smooth Newton's method based on Clarke's generalized Jacobian. Local convergence results are obtained but globalization is not considered. In \cite{chen2000penalized} a semismooth Newton's method which uses a penalized FB function to solve nonlinear complementarity problems is presented which uses a version of the C-differential and globalizes the algorithm using a line-search. The application of the FB function to diesel engine MPC was considered in \cite{huang2015nonlinear}. %Kanzow et al. \cite{kanzow1999qp} consider the solution of variational inequalities by the application of a FB function based non-smooth Newton's method to the KKT conditions of the variational inequality; global and local convergence results are obtained. 

Some key papers concerning generalized Newton's methods and their convergence are \cite{qi1993nonsmooth} and \cite{qi1997semismooth}. Reference \cite{martinez1995inexact} concerns the convergence of inexact generalized Newton's methods. Reference \cite{qi1999survey} is a useful survey on the topic.

% Smoothing Newton's methods are explored in \cite{chen1998global}, \cite{jiang1997smoothed}, and \cite{gabriel1997smoothing}. 
% Finally, 
% References \cite{martinez1995inexact} and \cite{facchinei1997nonsmooth} concern the convergence of inexact generalized Newton's methods.
%Smoothing Newton's methods are explored in \cite{chen1998global}, \cite{jiang1997smoothed}, and \cite{gabriel1997smoothing}. 

% notation
\textit{Notation}: Our notations are standard. $\reals^n$ denotes the set of $n$-dimensional real vectors. For a vector $x\in \reals^n$ $x_i$ denotes its $i$-th entry and the relations $\leq,\geq ,< ,>$ are understood component wise. For a matrix $A\in \reals^{m \times n}$ $A_i$ denotes the $i$-th row of the matrix; if $I$ is an index set $A_I$ denotes the concatenation of all $A_i,~i \in I$. Let $S_{+(+)}^n$ denote the set of $n \times n$ (strictly) symmetric positive definite matrices, $A\succ 0, A\succeq 0$ denote positive definiteness and semi-definiteness, respectively. The kernel of a map $T$ is denoted by $\ker{T}$ and $I$ is used to denote the identify matrix, the dimensions of which should be clear from context. Set unions, intersections, and subtractions are denoted by  $\cup,~\cap$, and $\setminus$ respectively. The cardinality of a set $S$ is denoted by $|S|$. A matrix or vector norm $||\cdot||$ will be taken to indicate the two norm unless otherwise indicated. Let $h:\reals^n \to \reals^m$ and $g(x):\reals^n \to \reals_{\geq 0}$. We write $h(x) = O(g(x))$ as $x \to \bar{x}$ if $\exists M > 0$ such that $||h(x)|| \leq M g(x)$ for all $x$ sufficiently close to $\bar{x}$. If $||h(x)|| \leq \eps g(x),~ \forall~\eps > 0$ holds for all $x$ sufficiently close to $\bar{x}$ then we say $h(x) = o(g(x))$.

% problem formulation
\section{Problem formulation} \label{ss:QP_formulation}
This paper considers solving convex QPs in $n$ variables with $q$ constraints of the form
\begin{subequations}\label{eq:QP}
\begin{gather} 
\underset{z}{\mathrm{min.}} \quad g(z) = \frac12 z^T H z + f^T z, \\
\mathrm{s.t} \quad Az \leq b,
\end{gather}
\end{subequations}
where $H \in S^{n}_+ $ is the Hessian matrix, $f\in \reals^{n}$, $z\in \reals^n$, $A\in\reals^{q \times n}$, and $b \in \reals^q$. For simplicity we consider the case where there are no equality constraints. The extension to equality and inequality constrained problems is straightforward, alternatively polyhedral inequality and equality constrained problems can always be converted into a purely inequality constrained problem provided the equality constraints are feasible, see e.g., \cite[Section 4.1.3]{boyd2004convex}.
The Lagrangian for this problem is
\begin{equation}  \label{eq:Lagrangian}
  L(z,v) = \frac12 z^T H z + f^T z + v^T(Az -b),
\end{equation}
and the KKT conditions for \eqref{eq:QP} are
\begin{subequations}\label{eq:KKT}
\begin{gather}
\nabla_z L = Hz + f + A^T \nu = 0,\\
\nu_i\cdot y_i = 0,~i = 1~...~q,\label{eq:comp1}\\
\nu \geq 0,~~ y\geq 0, \label{eq:comp2}
\end{gather}
\end{subequations}
where we let,
\begin{equation}
   y = b-Az,
\end{equation} 
denote the constraint residual. Since $H$ is positive semidefinite these conditions are necessary and sufficient for optimality under an appropriate constraint qualification. We are interested in the case where \eqref{eq:QP} has a unique primal-dual solution, so we impose some additional assumptions on the problem.

Let $x^* = (z^*,v^*)$ denote a point satisfying \eqref{eq:KKT}, referred to as a critical or KKT point, and let $I_a(z) = \{i\in 1~...~ q~|~ A_i z = b_i\}$ denote the set of active constraints at $z$. Recall that for a system of linear inequalities of the form $Az\leq b$, the linear independence constraint qualification (LICQ) is said to hold at a point $\bar{z}$ if 
\begin{equation}
  \text{rank}~A_{I_a(\bar{z})} = |I_a(\bar{z})|,
\end{equation}
and that if 
\begin{equation}
  u^T H u> 0,~\forall u \neq 0~\text{such that}~A_i u = 0,~\forall i \in I_a^+(z^*,v^*),
\end{equation} where $I_a^+(z,v) = \{i \in 1~...~ q~|~ A_iz = b_i,~v_i > 0\}$, then $x^*$ is said to satisfy the strong second order sufficient conditions (SSOSC). The following two assumptions are then sufficient for local primal-dual uniqueness.
\begin{assumption} \label{assump:SSOSC}
(\textbf{A1}) There exists a point $x^* = (z^*,v^*)$ that satisfies the strong second order sufficient conditions.
\end{assumption}
\begin{assumption} \label{assump:LICQ}
(\textbf{A2}) The linear independence constraint qualification (LICQ) holds at $x^*$.
\end{assumption}
Since \eqref{eq:QP} is convex, the SSOSC implies that $z^*$ is the unique global minimizer of \eqref{eq:QP}. The LICQ is used in place of Slater's condition, because the LICQ implies that the dual variable $v^*$ associated with $z^*$ is unique and thus the primal-dual solution is isolated; this simplifies the convergence analysis. In the degenerate case where either the primal or dual solution is not isolated an algorithm in the same vein as the stabilized Josephy-Newton method may be applicable, see e.g., \cite[Chapter 7]{izmailov2014newton}.

% If \eqref{eq:QP} is infeasible then a extended problem can be generated using primal-dual embedding, see e.g., \cite{de1997initialization}. However, in real-time applications feasibility is often ensured by construction, using e.g., slack variables. 

Since $H$ is assumed to be positive semidefinite rather than strictly positive definite we add an additional assumption to ensure that intermediate iterations are well defined.
\begin{assumption} \label{assump:ker_intersection}
(\textbf{A3}) $\ker{H}~\cap~\ker{A} = \{0\}$
\end{assumption}
Intuitively, this condition requires all directions along which the cost function has no curvature to be constrained. For a QP this condition can be checked numerically. It is often desirable for (A\ref{assump:ker_intersection}) to be satisfied by construction in an embedded context; typically using regularization. Note that $H$ is only assumed to be positive semidefinite, rather than strictly positive definite; it allows \eqref{eq:QP} to capture a wider range of QP problems. If $H$ is strictly positive definite then the SSOSC and (A\ref{assump:ker_intersection}) are satisfied automatically.

%\cite{rademacher1919partielle} 

\section{Some concepts from non-smooth analysis} \label{ss:concepts}
In this section we review some key concepts from non-smooth analysis which are required to motivate and analyze FBRS. We begin with generalized differentiation. Suppose a function $G : \reals^N \mapsto \reals^M$ is locally Lipschitz on a set $U \subseteq \reals^N$, i.e. $\exists~L>0~~s.t~~ ||G(x+\xi) - G(x)|| \leq L||\xi||,~\forall \xi \in U$. Then Rademacher's theorem \cite{rademacher1919partielle} states that $G$ is differentiable almost everywhere. Letting $D_G$ denote the set of points where $G$ is differentiable, the B-differential is defined as
\begin{equation}
  \partial_B G(x) = \{J\in \reals^{M\times N}|~ \exists \{x^k\} \subset D_G : \{x^k\} \rightarrow x,~ \{\nabla G(x_k)\} \rightarrow J\},
\end{equation}
and Clarke's generalized Jacobian, $\partial G(x) = \text{convh}~\partial_B G(x),$ can be defined as the convex hull of the B-differential \cite{clarke1990optimization}. The generalized Jacobian is a set of matrices, wherever $G$ is differentiable $\nabla G(x) \in \partial G(x)$ and $\partial G(x) = \{\nabla G(x)\}$ wherever $G$ is continuously differentiable \cite{izmailov2014newton}.

% semi-smoothness
A mapping $G:\reals^N \mapsto \reals^M$ is said to be semismooth at $x \in \reals^N$ if $G$ is locally Lipschitz at $x$, directionally differentiable in every direction and the estimate
\begin{equation}
  \underset{J \in \partial G(x+\xi)}{\text{sup}} ||G(x+\xi) - G(x) - J\xi|| = o(||\xi||),
\end{equation}
holds. If the right hand side is replaced by the stronger bound $O(||\xi||^2)$ then $G$ is said to be strongly semismooth at $x$ \cite{izmailov2014newton}. The concept of semismoothness plays a key role in the analysis of non-smooth Newton's methods \cite{qi1993nonsmooth}.

% A mapping $G:\reals^N \mapsto \reals^N$ is said to be CD regular \cite{qi1997semismooth} at $x$ if all $J \in \partial G(x)$ are non-singular. This property is closely related to the existence and Lipschitz continuity of a local inverse mapping $G^{-1}$ and reduces to the non-singularity of the Jacobian if $G$ is a continuously differentiable function.\\

% c-differential
In this paper the C-differential of a mapping, denoted $\partial_C G$, is defined as
\begin{equation}
  \partial_C G = \partial G_1 \times \partial G_2 \times ...~\partial G_{N},
\end{equation}
where $\partial G_i$ is the transpose of the so-called generalized gradient of $G_i$, which is simply the generalized Jacobian \cite{clarke1990optimization} of the component mapping $G_i: \reals^N \mapsto \reals$. Note that $\partial G_i$ is a row vector, thus $\partial_C G$ is a set of matrices whose rows are the transposed generalized gradients of the component functions $G_i$. This form of the C-differential was introduced in \cite{chen1998global} and \cite{qi1996c}, it possesses many of the properties of the generalized Jacobian but can be easier to compute and characterize.

\section{The Algorithm and its Properties}
FBRS (Fischer-Burmeister Regularized and Smoothed) is a complimentary mapping algorithm based on Newton's method which uses smoothing to ensure robust globalization.  FBRS, and complementarity mapping methods in general, function by bijectively mapping the complementarity conditions \eqref{eq:comp1} and \eqref{eq:comp2} to a system of equations using what is known as a nonlinear complementarity (NCP) function\cite{sun1999ncp}. We will use the generalized (smoothed) Fischer-Burmeister function
\begin{equation}
  \phi_\eps(a,b) = a + b - \sqrt{a^2 + b^2 + \eps^2},
\end{equation}
which has the property that
\begin{equation}
  \phi_\eps(a,b) = 0 \Leftrightarrow a\geq 0,~b\geq 0,~\sqrt{2} ab = \eps,
\end{equation}
and that
\begin{equation} \label{eq:phi_eps_bound}
  ||\phi_\eps(a,b) - \phi(a,b)||\leq \eps, \quad \phi(a,b) = \phi_0(a,b).
\end{equation}

%Reference \cite{chen2008family} has derivations for some properties of the FB function.\\

Applying the FB function to the complementarity conditions\footnote{Throughout this paper the application of the FB function to vectors i.e., $\phi_\eps(x,y)$ for $x,y \in \reals^n$, will be understood to be elementwise.} in \eqref{eq:KKT} yields the following nonlinear mapping,
\begin{equation} \label{eq:F}
F_\eps(x) = \begin{bmatrix}
\nabla_zL(z,v)\\
\phi_\eps(v,y)\\
\end{bmatrix}, \quad F(x) = F_0(x)
\end{equation}
where $x = (z,v)$ denotes the primal-dual pair. The following two results are direct consequences of the properties of the FB function and relate the smoothed and original problems.

\begin{corollary} \label{corr:root_unique}
Under the assumptions in Section~\ref{ss:QP_formulation} the mapping $F_0(x) = F(x) = 0$ if and only if $x = x^*$, is the solution to \eqref{eq:QP}. Further, this root is unique.
\end{corollary}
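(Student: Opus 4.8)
The plan is to exploit the defining equivalence of the Fischer-Burmeister function at $\eps = 0$ to show that the equation $F(x) = 0$ is merely a repackaging of the KKT system \eqref{eq:KKT}, and then to invoke convexity together with assumptions (A\ref{assump:SSOSC})--(A\ref{assump:LICQ}) to convert this into a statement about optimality and uniqueness.

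First I would unpack $F(x) = F_0(x) = 0$ block by block. The top block is exactly the stationarity condition $\nabla_z L = Hz + f + A^T v = 0$. For the bottom block I would apply the equivalence for $\phi_\eps$ stated above with $\eps = 0$, namely $\phi(a,b) = 0 \Leftrightarrow a \geq 0,~ b \geq 0,~ ab = 0$, componentwise to $\phi(v,y)$. Since $y = b - Az$ by definition, this recovers primal feasibility $Az \leq b$, dual feasibility $v \geq 0$, and complementary slackness $v_i y_i = 0$, i.e. precisely \eqref{eq:comp1}--\eqref{eq:comp2}. Hence $F(x) = 0$ if and only if $x$ is a KKT point of \eqref{eq:QP}. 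Because $H \succeq 0$ the problem is convex, so these conditions are necessary and sufficient for $(z,v)$ to be a primal-dual optimal pair, with the requisite constraint qualification supplied by (A\ref{assump:LICQ}). Chaining the two observations yields the claimed equivalence $F(x) = 0 \Leftrightarrow x = x^*$.

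For uniqueness I would argue in two pieces. The SSOSC (A\ref{assump:SSOSC}) furnishes strict second-order growth on the critical cone, hence a strict local minimizer; since the cost and feasible region are convex the solution set is convex and every local minimizer is global, so a second distinct minimizer would force an entire segment of minimizers and contradict strictness, establishing that $z^*$ is the unique minimizer. The LICQ (A\ref{assump:LICQ}) then forces the multiplier $v^*$ associated with $z^*$ to be unique. Therefore $x^* = (z^*, v^*)$ is the unique KKT point, and by the equivalence above the unique root of $F$.

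The hard part will be only the uniqueness argument, specifically the passage from the local uniqueness guaranteed by (A\ref{assump:SSOSC})--(A\ref{assump:LICQ}) to global uniqueness of the root; the equivalence itself is immediate from the NCP property of the FB function. Note that (A\ref{assump:ker_intersection}) plays no role in this corollary and is needed only later to keep the Newton subproblems well defined.
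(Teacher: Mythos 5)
Your proposal is correct and follows essentially the same route as the paper's proof: roots of $F$ coincide with KKT points, convexity plus the SSOSC gives uniqueness of $z^*$, and the LICQ gives uniqueness of $v^*$. The only difference is one of detail—the paper cites Fischer's original work for the equivalence between roots of $F$ and KKT points, whereas you derive it directly from the NCP property of $\phi$, and you likewise spell out the strict-minimizer-plus-convexity argument that the paper states without proof; your observation that (A\ref{assump:ker_intersection}) is not needed here is also consistent with the paper.
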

\begin{proof}
Since the problem is convex, the SSOSC implies uniqueness of $z^*$ and the LICQ implies uniqueness of $v^*$. Roots of $F(x)$ coincide with points which satisfy the KKT conditions \cite{fischer1992special}. Thus $x^*$ uniquely satisfies $F(x^*) = 0$.
\end{proof}
\begin{corollary} \label{corr:smoothing_bound}
For all $x \in \reals^{n+q}$ and $\eps \geq 0$ we have that $||F_\eps - F|| \leq \sqrt{q} \eps$ and  
\begin{equation}
  ||F(x)|| \leq ||F_\eps(x)|| + \sqrt{q}\eps.
\end{equation}
\end{corollary}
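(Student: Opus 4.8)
The plan is to exploit the block structure of $F_\eps$ and reduce everything to the scalar smoothing bound \eqref{eq:phi_eps_bound}. First I would observe that the gradient block $\nabla_z L(z,v) = Hz + f + A^T v$ does not depend on $\eps$, so it is identical in $F_\eps(x)$ and $F(x) = F_0(x)$. Consequently the difference collapses onto the complementarity block:
\begin{equation}
  F_\eps(x) - F(x) = \begin{bmatrix} 0 \\ \phi_\eps(v,y) - \phi(v,y) \end{bmatrix},
\end{equation}
a vector whose only nonzero entries are the $q$ componentwise differences $\phi_\eps(v_i,y_i) - \phi(v_i,y_i)$, recalling that the FB function is applied elementwise and that $y = b - Az$.

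Next I would bound the norm of this residual. Equation \eqref{eq:phi_eps_bound} gives the scalar estimate $|\phi_\eps(v_i,y_i) - \phi(v_i,y_i)| \leq \eps$ for each $i = 1, \dots, q$. Squaring and summing over the $q$ coordinates yields
\begin{equation}
  \|F_\eps(x) - F(x)\|^2 = \sum_{i=1}^q |\phi_\eps(v_i,y_i) - \phi(v_i,y_i)|^2 \leq q\,\eps^2,
\end{equation}
and taking square roots establishes the first claimed bound $\|F_\eps - F\| \leq \sqrt{q}\,\eps$. The second inequality then follows immediately from the triangle inequality: writing $F(x) = \big(F(x) - F_\eps(x)\big) + F_\eps(x)$ and applying the bound just derived gives $\|F(x)\| \leq \|F_\eps(x)\| + \sqrt{q}\,\eps$.

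There is no real obstacle here; the argument is a direct computation once the structural reduction is made. The only point that warrants care is the dimension-counting step that produces the $\sqrt{q}$ factor: the per-component bound is uniform in $\eps$, but because the norm in question is the Euclidean norm over $q$ coordinates, the scalar bound $\eps$ accumulates to $\sqrt{q}\,\eps$ rather than $\eps$. I would emphasize that this factor is exactly what will later quantify how the smoothed stationarity measure $\|F_\eps(x)\|$ controls the true residual $\|F(x)\|$, which is the property that makes the bound useful for the convergence analysis.
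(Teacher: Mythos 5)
Your proposal is correct and follows essentially the same route as the paper's proof: reduce the difference $F_\eps - F$ to the complementarity block, apply the scalar bound \eqref{eq:phi_eps_bound} componentwise to accumulate the $\sqrt{q}\,\eps$ factor through the Euclidean norm, and conclude with the triangle inequality (the paper phrases this last step via the reverse triangle inequality, which is the same argument). No gaps.
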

\begin{proof}
Direct computation yields,
\begin{equation}
||F(x)_\eps - F(x)|| = \left |\left|\begin{bmatrix}
  0 \\ \phi_\eps(y,v) - \phi(y,v)
\end{bmatrix} \right |\right|,
\end{equation}
applying \eqref{eq:phi_eps_bound}, the properties of the two norm, and the reverse triangle inequality then yields the result. 
\end{proof}
\medskip
\tabularnewline

The general FBRS algorithm approximately solves a sequence of sub-problems $F_{\eps_k}(x_k) = 0$ for a decreasing sequence, $\eps_k \to 0$. Each sub-problem is solved using Newton's method
\begin{equation} \label{eq:base_newton}
  x_{k+1} = x_k - t_k V_k^{-1}F_{\eps_k}(x_k),
\end{equation}
where $V_k = \nabla_x F_{\eps_k}(x_k)$ and $t_k \in (0,1]$ is a steplength chosen by a linesearch to enforce convergence far from a solution. 

The iteration matrix or Jacobian $V_k$ is always non-singular for any $\eps \geq 0$ (see Section~\ref{ss:directions}, Theorem~\ref{thm:V_nonsingularity}) but can become ill-conditioned, so a regularization term is added. Defining $R(x,\delta) = \left[0^T~~\delta (v^T+y^T) \right]^T$, where $\delta \geq 0$ is the regularization strength, we can replace $V_k$ with $K_k = V_k + \nabla_x R(x_k,\delta_k)$ in \eqref{eq:base_newton}, leading to the smoothed and regularized Newton iteration
\begin{equation} \label{eq:newton}
  x_{k+1} = x_k - t_k K_k^{-1}F_{\eps_k}(x_k),
\end{equation} 
which forms the core of the FBRS method. Expressions for computing $K$ and an analysis of its properties are presented in Section~\ref{ss:directions}.

\begin{rmk} \label{rmk:FBRSss}
A semismooth version of FBRS results if in \eqref{eq:base_newton} $V_k$ is redefined as $V_k \in \partial_C F_{\eps} (x_k)$. This version allows $\eps = 0$ and reduces to the smoothed version for $\eps > 0$ (since then $\partial_C F_{\eps} = \{\nabla_x F_\eps\}$). A semismooth FB method without regularization has been previously proposed in the literature, see e.g., \cite{facchinei1998regularity} or \cite[Section 5.1.2]{izmailov2014newton} and is globally convergent for convex QPs. However, we have observed that both smoothing and regularization improve the numerical performance of the algorithm and avoid the need to compute generalized derivatives. 
%Further, the application of the smoothed algorithm to more general convex programming problems appears more straightforward than the semismooth variant as the conditions required for global convergence of the semismooth version \cite[Theorem 4.3]{facchinei1998regularity} are difficult to satisfy when the constraints are nonlinear.
The semismooth variant of FBRS can be shown (see Section~\ref{ss:convergence}, Theorem~\ref{thrm:asym_conv}) to be locally quadratically convergent when $\eps \geq 0$. This property distinguishes NCP function based smoothing methods from interior points methods, where the barrier strength can only approach zero in the limit. This property means that the FBRS subproblems do not become ill-conditioned even when $\eps$ is very small, facilitating warmstarting.
\end{rmk}

FBRS is summarized in Algorithm~\ref{algo:FBRS} and is simply Newton's method globalized using a linesearch and homotopy. The merit function used to globalize each subproblem in the algorithm is defined as
\begin{equation}
  \theta_\eps(x) = \frac12 ||F_\eps(x)||_2^2,
\end{equation}
and the parameter $\sigma \in (0,0.5)$ encodes how much reduction we require in the merit function. The desired solution tolerance is denoted $\tau$, and $\beta \in (0,1)$ controls reduction in the backtracking linesearch. More sophisticated algorithms for computing $t_k$, e.g., polynomial interpolation, can be used in place of the backtracking linesearch; however we found this backtracking to be effective in practice. Typical values for the fixed parameters are $\sigma \approx 10^{-4}$ and $\beta \approx 0.7$. 

\begin{algorithm}[H]
\caption{FBRS}
\begin{algorithmic}[1] \label{algo:FBRS}
 \renewcommand{\algorithmicrequire}{\textbf{Input:}}
 \renewcommand{\algorithmicensure}{\textbf{Output:}}
 \REQUIRE $H$, $A$, $f$, $b$, $x_0$, $\sigma$, $\beta$, $\tau$, $\delta_0$, \texttt{max\_iters}
 \ENSURE  $x$
  \STATE $x \gets x_0$, $\eps \gets \frac{\tau}{2\sqrt{q}}$, $\delta \gets \delta_0$
  \FOR{\texttt{k = 0 to max\_iters-1}}
    \STATE $\delta \gets \text{min}(\delta,||F_\eps(x)||)$ \label{step:reg_update}
    \IF{$||F_0(x)|| \leq \tau$}
      \STATE \texttt{break;}
    \ENDIF
    \STATE Solve $K(x,\eps,\delta)\Delta x = -F_\eps(x)$ for $\Delta x$ \label{step:solve_step}
    % \IF{$\nabla \theta(x)^T \Delta x \geq 0$}
    %   \STATE $\Delta x \gets -\nabla \theta(x)$
    % \ENDIF
    \STATE $t \gets 1$
    \WHILE{$\theta_\eps(x+ t \Delta x) \geq (1- 2t\sigma) \theta_\eps(x)$}
      \STATE $t \gets \beta t$ 
    \ENDWHILE
    \STATE $x \gets x+ t \Delta x$
  \ENDFOR
  \RETURN $x$
 \end{algorithmic}
 \end{algorithm}

For any fixed $\eps > 0$ FBRS, under the assumptions in Section \ref{ss:QP_formulation}, exhibits global linear convergence (see Section~\ref{ss:convergence}, Theorem~\ref{thm:global_convergence}) and local quadratic convergence (see Section~\ref{ss:convergence}, Theorems~\ref{thrm:asym_conv} and \ref{thm:q_recovery}) to the unique point satisfying $F_\eps(x) = 0$. For simplicity, the assumption that $\delta_k$ is always ``small enough'' that global convergence is not impeded is implicit in Algorithm~\ref{algo:FBRS}. If $\delta_k$ is too big at any iteration the linesearch may fail. In this case $\delta_k$ can be reduced and a new step computed or a gradient descent step on the merit function can be taken.

\begin{rmk}
For embedded applications we often only require moderate precision solutions, e.g., $||F|| \approx 10^{-6}$ to $10^{-8}$ in double precision. In these situations we found fixing $\eps$ at a small value is sufficient and simplifies warmstarting by removing the need to reinitialize $\eps$. In this paper we simply take $\eps_k = \frac{\tau}{2\sqrt{q}}$. More sophisticated strategies for updating $\eps$ may be helpful for improving numerical performance. Inspiration could potentially be drawn from the varied barrier update rules used in interior point methods \cite{nocedal2006numerical}. If high precision solutions are required the semismooth ($\eps = 0$) variant of FBRS can be used and could be warmstarted using the smoothed algorithm. 
\end{rmk}

\begin{rmk}
In practice since $K_k$ is guaranteed to be nonsingular even for $\delta = 0$ the regularization parameter exists solely to handle numerical ill-conditioning and a small fixed value can be used throughout. For example $\delta_k = \delta_0 = 10^{-8}$ was used throughout this paper.
\end{rmk}

\section{The Iteration Matrix and Computation of Step Directions} \label{ss:directions} This section presents a more detailed analysis of the properties of the iteration matrix $K$, whose factorization is the main computational burden of FBRS.
The Newton step system is
\begin{equation}
  \begin{bmatrix} \label{eq:newton_system}
    H & A^T\\
    -CA& D
  \end{bmatrix} \begin{bmatrix}
    \Delta z\\ \Delta \nu
  \end{bmatrix}  = \begin{bmatrix}
    - \nabla_z L(z,v)\\
-\phi_\eps(\nu,y)
  \end{bmatrix} = \begin{bmatrix}
    r_s\\ r_c
  \end{bmatrix},
\end{equation}
the matrices $C = diag(\gamma)$ and $D = diag(\mu)$ are diagonal matrices constructed by regularizing the gradient of the smoothed FB function. If $\eps > 0$ the diagonal elements of $C$ and $D$ are,
\begin{equation} \label{eq:CD_smooth}
  \gamma_i = 1-\frac{y_i}{\sqrt{y_i^2 + v_i^2 + \eps^2}} + \delta,~ \mu_i = 1-\frac{v_i}{\sqrt{y_i^2 + v_i^2 + \eps^2}} + \delta,
\end{equation}
further, it is evident that $C \succ 0$ and $D \succ 0$ if $\eps >0$.
\begin{rmk}
The generalized gradient of the FB function is well known, see e.g., \cite{jiang1997smoothed}; when using the semismooth version of FBRS $K$ can be computed by noting that $\gamma_i$ and $\mu_i$ are multivalued only if $\eps = 0$ and $(v_i,y_i) = 0$; then any choice of $\gamma_i$ and $\mu_i$ that statisfies
\begin{equation}
  \gamma_i = 1-\alpha+\delta,~ \mu_i = 1-\beta+\delta ~~\text{s.t} \quad \alpha^2 + \beta^2 \leq 1,
\end{equation}
implies that $K \in \partial_C F_\eps(x,\eps) + \nabla_x R(x,\delta)$.
\end{rmk}
The Jacobian matrix $V = \nabla_x F_\eps$ can be shown to be nonsingular.
\begin{thm} \label{thm:V_nonsingularity}
Let Assumption~\ref{assump:ker_intersection} hold and pick $\eps > 0$. Then $V = \nabla_x F_\eps(x)$ is non-singular $\forall x \in \reals^{n+q}$.
\end{thm}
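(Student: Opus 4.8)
The plan is to prove nonsingularity by showing that $\ker V = \{0\}$. Reading the Jacobian $V = \nabla_x F_\eps$ off the Newton system \eqref{eq:newton_system} with $\delta = 0$ gives the block form $V = \begin{bmatrix} H & A^T \\ -CA & D \end{bmatrix}$, where $C = \mathrm{diag}(\gamma)$ and $D = \mathrm{diag}(\mu)$ have diagonal entries $\gamma_i = 1 - y_i/\sqrt{y_i^2+v_i^2+\eps^2}$ and $\mu_i = 1 - v_i/\sqrt{y_i^2+v_i^2+\eps^2}$. The first observation, and the one that makes the hypothesis $\eps > 0$ essential, is that then $\sqrt{y_i^2+v_i^2+\eps^2} > |y_i|$ and $> |v_i|$, so each $\gamma_i$ and $\mu_i$ lies strictly in $(0,2)$; hence $C \succ 0$ and $D \succ 0$, as already noted after \eqref{eq:CD_smooth}. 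In particular $D$ is invertible and the diagonal matrix $D^{-1}C$ is positive definite.

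Next I would take an arbitrary $(u,w) \in \ker V$ and write out the two block equations $Hu + A^T w = 0$ and $-CAu + Dw = 0$. Since $D \succ 0$, the second equation can be solved for $w = D^{-1}C A u$. Substituting this into the first equation after premultiplying by $u^T$ yields the scalar identity $u^T H u + (Au)^T D^{-1}C(Au) = 0$. Both terms are nonnegative — the first because $H \succeq 0$, the second because $D^{-1}C$ is diagonal with strictly positive entries — so each must vanish separately.

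Finally I would extract the two consequences. From $u^T H u = 0$ with $H \succeq 0$ (writing $H = L^T L$ so that $\|Lu\|^2 = 0$) it follows that $Hu = 0$, i.e.\ $u \in \ker H$; from $(Au)^T D^{-1}C(Au) = 0$ with $D^{-1}C \succ 0$ it follows that $Au = 0$, i.e.\ $u \in \ker A$. Assumption~\ref{assump:ker_intersection} then forces $u \in \ker H \cap \ker A = \{0\}$, so $u = 0$, and the relation $w = D^{-1}C A u$ gives $w = 0$. Hence $\ker V = \{0\}$ and $V$ is nonsingular for every $x$.

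I expect the only delicate point to be the nonsymmetry of $V$: because the $(2,1)$ block is $-CA$ rather than $A$, one cannot simply invoke a symmetric positive-(semi)definiteness argument on the full matrix. Eliminating $w$ through the invertibility of $D$ and reducing to the single quadratic-form identity above is what circumvents this, turning the claim into a clean application of Assumption~\ref{assump:ker_intersection}. The strict positivity of $C$ and $D$, which is exactly where $\eps > 0$ enters, is the other ingredient that must be stated carefully, since it is what guarantees both that $D$ can be inverted and that $D^{-1}C$ is definite rather than merely semidefinite.
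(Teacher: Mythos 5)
Your proof is correct and is essentially the paper's argument in a different wrapper: both use $\eps>0$ to get $C,D\succ 0$, eliminate the dual block via $D^{-1}$, and reduce everything to showing that the Schur complement $H + A^T D^{-1}CA$ is positive definite by splitting the quadratic form into $u^T H u$ and $(Au)^T D^{-1}C (Au)$ and invoking Assumption~\ref{assump:ker_intersection}. The only difference is presentational — the paper packages the reduction as an explicit block LU factorization of $V$ (which it reuses later for the condensed solve \eqref{eq:Kred}), whereas you run the equivalent direct kernel argument.
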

\begin{proof}
Recall that since $V$ is smoothed but not regularized ($\delta = 0$) if $\eps > 0$ then $D \succ 0$, and thus $V$ can be factored blockwise as
\begin{multline} \label{eq:factor}
  V  =
  \begin{bmatrix}
    I & A^T D^{-1} \\
    0 & I
  \end{bmatrix} 
  \begin{bmatrix}
    H + A^T D^{-1}C A & 0 \\
    0 & D
  \end{bmatrix}
  \begin{bmatrix} 
    I & 0 \\
    -D^{-1} C A & I
  \end{bmatrix}.
\end{multline}
The upper and lower triangular factors are necessarily invertible, as is $D$, thus only $T = H + A^T D^{-1}C A$ must to be analyzed; we will show that $T \succ 0$. Letting $L = \sqrt{D^{-1}C}A$, it's clear that $T = H + L^TL \succeq 0$. To show positive definiteness assume there exists
\begin{equation}
   z \neq 0 \text{ such that } z^TT z = z^THz + (Lz)^T Lz = 0,
 \end{equation}
since $H \succeq 0$ and $L^T L \succeq 0$, $z^T T z = 0$ implies that $Hz = 0$ and $Lz = \sqrt{D^{-1}C}A z = 0$. Since $D^{-1}C$ is diagonal and positive definite this contradicts Assumption~\ref{assump:ker_intersection} ($\ker H \cap \ker A = \{0\}$). As a result $H + A^T D^{-1}CA$ must be positive definite and each factor in \eqref{eq:factor} is invertible.
\end{proof}

\begin{rmk} \label{rmk:K_compute}
It turns out that for the special case of convex QPs all elements of $\partial_C F_\eps$ are non-singular when $\eps \geq 0$ \cite[Theorem 4.4]{facchinei1998regularity}. The proof is involved and we have observed that the smoothed version works well in practice; as a result we have elected to present a self-contained proof of Theorem~\ref{thm:V_nonsingularity} for the smoothed version.
\end{rmk}

\begin{corollary} \label{corr:K_nonsingular}
Let assumption A\ref{assump:ker_intersection} hold, $\delta > 0$, and $\eps > 0$, then all $K = V + \nabla R(x,\delta)$ are non-singular.
\end{corollary}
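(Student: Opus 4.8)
The plan is to show that $K$ inherits the same $2\times2$ block structure as $V$ and that the regularization only strengthens the positive definiteness of the diagonal blocks, so that the argument of Theorem~\ref{thm:V_nonsingularity} carries over essentially verbatim. First I would make the structure of $K$ explicit. Differentiating the regularization term $R(x,\delta)=[0^T~~\delta(v^T+y^T)]^T$ with respect to $x=(z,v)$, and using $y=b-Az$, gives
\begin{equation*}
\nabla_x R(x,\delta) = \begin{bmatrix} 0 & 0 \\ -\delta A & \delta I \end{bmatrix},
\end{equation*}
so that, adding this to the expression for $V$ in \eqref{eq:newton_system},
\begin{equation*}
K = V + \nabla_x R(x,\delta) = \begin{bmatrix} H & A^T \\ -CA & D \end{bmatrix},
\end{equation*}
where now $C = \text{diag}(\gamma)$ and $D = \text{diag}(\mu)$ carry the \emph{regularized} diagonal entries of \eqref{eq:CD_smooth}; that is, each unregularized entry is shifted by $+\delta$.

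The key observation is that for $\eps>0$ the unregularized entries $1-y_i/\sqrt{y_i^2+v_i^2+\eps^2}$ and $1-v_i/\sqrt{y_i^2+v_i^2+\eps^2}$ are already strictly positive (the denominator strictly dominates the numerator whenever $v_i^2+\eps^2>0$), so adding $\delta>0$ keeps them positive; hence $C\succ0$ and $D\succ0$, exactly as noted after \eqref{eq:CD_smooth}. This strict positivity of $D$ and $C$ is all that the proof of Theorem~\ref{thm:V_nonsingularity} actually uses.

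With $D\succ0$ invertible, I would apply the same blockwise factorization \eqref{eq:factor} to $K$, reducing its non-singularity to positive definiteness of the Schur complement $T = H + A^T D^{-1} C A$. Because $D^{-1}C$ is diagonal and positive definite, setting $L=\sqrt{D^{-1}C}\,A$ gives $T=H+L^TL\succeq0$, and any $z\neq0$ with $z^TTz=0$ forces both $Hz=0$ and $Az=0$, contradicting Assumption~\ref{assump:ker_intersection}. Thus $T\succ0$, every factor in \eqref{eq:factor} is invertible, and $K$ is non-singular.

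There is no real obstacle here: the corollary is Theorem~\ref{thm:V_nonsingularity} with $C,D$ replaced by their regularized counterparts. The only points requiring care are the explicit computation of $\nabla_x R$ (verifying that regularization shifts precisely the $(2,1)$ and $(2,2)$ blocks, consistently with \eqref{eq:CD_smooth}) and confirming that the shift by $\delta>0$ preserves, and indeed reinforces, the strict positivity of $C$ and $D$ that drives the kernel argument. Once these are in place the Schur-complement reasoning is identical to that of the theorem.
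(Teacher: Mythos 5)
Your proof is correct and follows essentially the same route as the paper, whose own proof simply notes that $C$ and $D$ remain positive definite when $\delta>0$ and $\eps>0$ and then repeats the argument of Theorem~\ref{thm:V_nonsingularity}. Your version adds worthwhile detail the paper leaves implicit --- the explicit computation of $\nabla_x R(x,\delta)$ showing that regularization shifts exactly the $(2,1)$ and $(2,2)$ blocks so that $K$ retains the structure of \eqref{eq:newton_system} --- but the core Schur-complement reasoning is identical.
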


\begin{proof}
Recall that if $\delta > 0$ then $C$ and $D$ are both positive definite for all $\eps > 0$. Then repeat the proof of Theorem~\ref{thm:V_nonsingularity}.
\end{proof}

The main computational burden of FBRS is solving the linear system, $K\Delta x = -F_\eps$, see Algorithm~\ref{algo:FBRS}. If there is no exploitable structure then an LU decomposition is a practical choice. If $K$ is sparse or matrix-vector products can be computed quickly then an iterative method may be appropriate. Alternatively, using the same block-LU decomposition as in the proof of Theorem~\ref{thm:V_nonsingularity} a condensed decomposition can be derived
\begin{subequations}\label{eq:Kred}
\begin{gather}
(H + A^T CD^{-1} A) \Delta z = r_s - A^TD^{-1}r_c,\\
D\Delta \nu = r_c + CA\Delta z.
\end{gather}
\end{subequations}
Under (A\ref{assump:ker_intersection}) we have that $H + A^T CD^{-1} A \succ 0$ and thus we are able to reduce a general $q+n \times q+n$ system to a $n \times n$ dense symmetric positive definite system, which can be solved using a Cholesky factorization or the conjugate gradient method, and a diagonal $q \times q$ system. 

% Note that unless structure present in $C,D,$ and $A$ is exploited the computational cost of computing $H + A^T CD^{-1} A$ has the potential to dominate the factorization.

% \begin{rmk}
% If $v_i^2 + y_i^2 + \eps^2 \approx 0$ and $(y_i,v_i) > 0$ then $V$ can become ill-conditioned. While it is possible to handle this ill-conditioning using smoothing alone, due to the form in which $\eps$ enters into \eqref{eq:CD_smooth}, a large $\eps$ is required to achieve reasonable conditioning in this manner. Since this may unnecessarily impact convergence it motivates the addition of the regularization term as a practical way to improve conditioning without excessive smoothing.
% \end{rmk}

\section{Numerical experiments}
In this section we compare FBRS against other methods in terms of execution time on both real-time hardware and on a regular computer. Execution times on a computer are of relevance during initial controller development and allow us to compare FBRS against a wider array of solvers. FBRS was compared against several state of the art solvers: (i) \texttt{quadprog} (MATLAB 2015a SP1) Interior Point (IP), (ii) QPKWIK (dual active set) \cite{schmid1994quadratic}, (iii) ECOS (self-dual interior point) \cite{domahidi2013ecos}, (iv) GPAD (dual accelerated gradient projection) \cite{patrinos2014accelerated}, and (v) PDIP (primal-dual interior point)\cite[Algorithm 14.3]{nocedal2006numerical}. We use the norm of the natural residual,
\begin{equation}
	F_{NR}(x) = \begin{bmatrix}
	\nabla_z L(z,v)^T &
	\mathrm{min}(y,v)^T
	\end{bmatrix}^T,
\end{equation}
in this section to quantify the quality of a solution.

\subsection{Comparisons against other methods} \label{ss:comp_pc}
Four example problems of varying sizes were considered: A convex MPC controller for asteroid circumnavigation \cite{liao2016model}, a diesel engine Economic MPC problem \cite{liaomcpherson2017diesel}, an extended command governor for an F16 control problem, similar to the reference governor presented in \cite{kalabic2013prioritization}, and an MPC controller for a spacecraft attitude control problem, similar to the example presented in \cite{butts2016perturbed}. Each control problem generates a sequence of related, feasible, QPs. The solution of the previous QP is used to initialize the next for all algorithms that accept a warmstart. Table~\ref{tab:problem_sizes} summarizes the size of each problem and the number of QPs in the sequence\footnote{For all experiments performed on a laptop each QP in each sequence is solved 25 times and the measured execution time is averaged to attenuate variability in execution time caused by the operating system (OS).}.

The results\footnote{All experiments were performed on a 2015 i7 Macbook Pro with 16GB of memory running MATLAB 2015a SP1. FBRS, QPKWIK, PDIP, and GPAD were implemented in the MATLAB language and compiled into mex functions using MATLAB Coder. We implemented matrix factorization updating for QPKWIK. All second order methods were limited to 30 major iterations except for QPKWIK which used up to 500; GPAD was allowed up to 3000 iterations. FBRS, and PDIP were terminated when $||F_{NR}|| \leq 10^{-4}$. The solution tolerances for QUADPROG, GPAD and ECOS were tuned until the solution errors, using the same metric, were of the same order of magnitude as the other methods. QPKWIK does not have an adjustable error tolerance, $10^{-4}$ was used as the constraint tolerance.} are shown in Tables~\ref{tab:tave} and \ref{tab:tmax} which summarize the average and maximum execution times for each QP sequence, respectively. FBRS performed well on the smaller problems, with performance similar to the dual active set method on the diesel and spacecraft examples. FBRS was soundly beaten on the F16 problem which has many constraints and few variables. Two interior point methods (PDIP and QUADPROG IP) had the best worst case execution times on the asteroid example (the largest considered) by a small margin. FBRS had the best average execution time and was competitive in terms of worst case execution time. Overall, FBRS, despite its simplicity, was found to be competitive with state of the art solvers for both large and small scale problems.

\begin{table} 
\centering
\caption{Summary of problem sizes.}
\label{tab:problem_sizes}
\begin{tabular}{|c|c|c|c|c|} \hline
 & Asteroid & Diesel & F16 & S/C \\\hline
 Number of variables& $280$ & $30$ & $12$ & $31$ \\\hline
 Number of constraints & $490$ & $70$ & $1010$ & $181$ \\ \hline
 Number of QPs in sequence & $300$ & $3000$ & $599$ & $50$ \\\hline
\end{tabular}
\end{table}

\begin{table} 
\centering
\caption{Average execution time for each sequence of QPs. Entries in each column have been normalized by the first element.}
\label{tab:tave}
\begin{tabular}{|c|c|c|c|c|}
\hline 
 & Asteroid & Diesel & F16 & S/C\\ \hline
 Normalization [msec] & $17.75$ & $0.11$ & $2.14$ & $0.18$ \\ \hline \hline
 FBRS & $1$ & $1$ & $1$ & $1$ \\ \hline
 PDIP & $4.06$ & $9.62$ & $0.45$ & $7.40$ \\ \hline
 QPKWIK & $4.28$ & $1.12$ & $0.08$ & $0.54$ \\ \hline
 GPAD & $N/A$ & $17.94$ & $10.97$ & $0.81$ \\ \hline
 Quadprog IP & $3.60$ & $75.04$ & $16.23$ & $121.77$ \\ \hline
 ECOS & $9.01$ & $23.99$ & $8.54$ & $54.87$ \\ \hline
\end{tabular}

\end{table}

\begin{table}
\centering
\caption{Maximum execution time for each sequence of QPs. Entries in each column have been normalized by the first element.}
\label{tab:tmax}
\begin{tabular}{|c|c|c|c|c|}
\hline 
 & Asteroid & Diesel & F16 & S/C\\ \hline
 Normalization [msec] & $164.67$ & $1.16$ & $11.40$ & $0.26$ \\ \hline \hline
 FBRS & $1$ & $1$ & $1$ & $1$ \\ \hline
 PDIP & $0.94$ & $2.26$ & $0.18$ & $5.56$ \\ \hline
 QPKWIK & $3.24$ & $0.73$ & $0.06$ & $0.91$ \\ \hline
 GPAD & $N/A$ & $63.13$ & $8.80$ & $1.06$ \\ \hline
 Quadprog IP & $0.96$ & $8.22$ & $5.77$ & $112.42$ \\ \hline
 ECOS & $1.31$ & $3.95$ & $2.40$ & $44.82$ \\ \hline
\end{tabular}

\end{table}

\subsection{Comparisons on embedded hardware}
The performance of the FBRS solver was compared against other methods on embedded hardware. Specifically the economic MPC controller from \cite{liaomcpherson2017diesel} was placed in closed loop with a high fidelity model of a diesel engine. The model and controller, including the QP solvers which were implemented using embedded MATLAB, were implemented in Simulink (2010b SP2) and loaded onto a DS1006 rapid prototyping unit using real-time workshop (2.8 GHz CPU, 1 GB RAM). We used $||F_{NR}|| \leq 10^{-4}$ as a stopping criterion and measured the turnaround time of the QP solvers using the profiling tools supplied with the processor board. ECOS and \texttt{quadprog} could not be loaded onto the DS1006 board since they are not compatible with the real-time workshop build process. FBRSacc and PDIPacc disable all safeguards and use structured linear algebra to speed up computation of matrix operations, e.g., \eqref{eq:Kred}.
\begin{rmk}
Note that the DS1006 runs a real-time OS, and thus the turnaround time is the precise metric used to judge if an application is executable in real-time.
\end{rmk}
Figures~\ref{fig:HIL_cold} and \ref{fig:HIL_warm} illustrate the results when the solver are cold-started and warm-started respectively. GPAD was unable to converge when cold-started and QPKWIK performed very poorly, this is surprising given that it performed well during the tests in Section~\ref{ss:comp_pc}\footnote{We attribute this to subtleties in the automatic codegeneration process which we are still investigating. The code used to implement the QPKWIK algorithm on the DS1006 board is identical to the code used during the numerical trials detailed in Section~\ref{ss:comp_pc}.}.

A summary of the results are given in Table~\ref{tab:HIL}, FBRSacc performed best on average and was able to efficiently exploit warm-starting; PDIPacc was the most efficient cold-start algorithm. FBRSacc and PDIPacc had effectively (within 5 microseconds) identical worst case turnaround times.  Note that the sampling period of the diesel airpath control application is 8 msec; both FBRSacc and PDIPacc are thus real-time executable.

\begin{figure}
	\centering
	\includegraphics[width=0.40\textwidth]{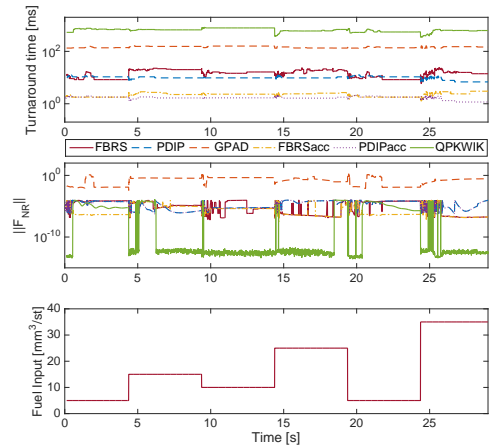}
	\caption{Measured turnaround time, residual, and the excitation sequence for the embedded comparisons. All methods were cold started.}
	\label{fig:HIL_cold}
\end{figure}

\begin{figure}
	\centering
	\includegraphics[width=0.40\textwidth]{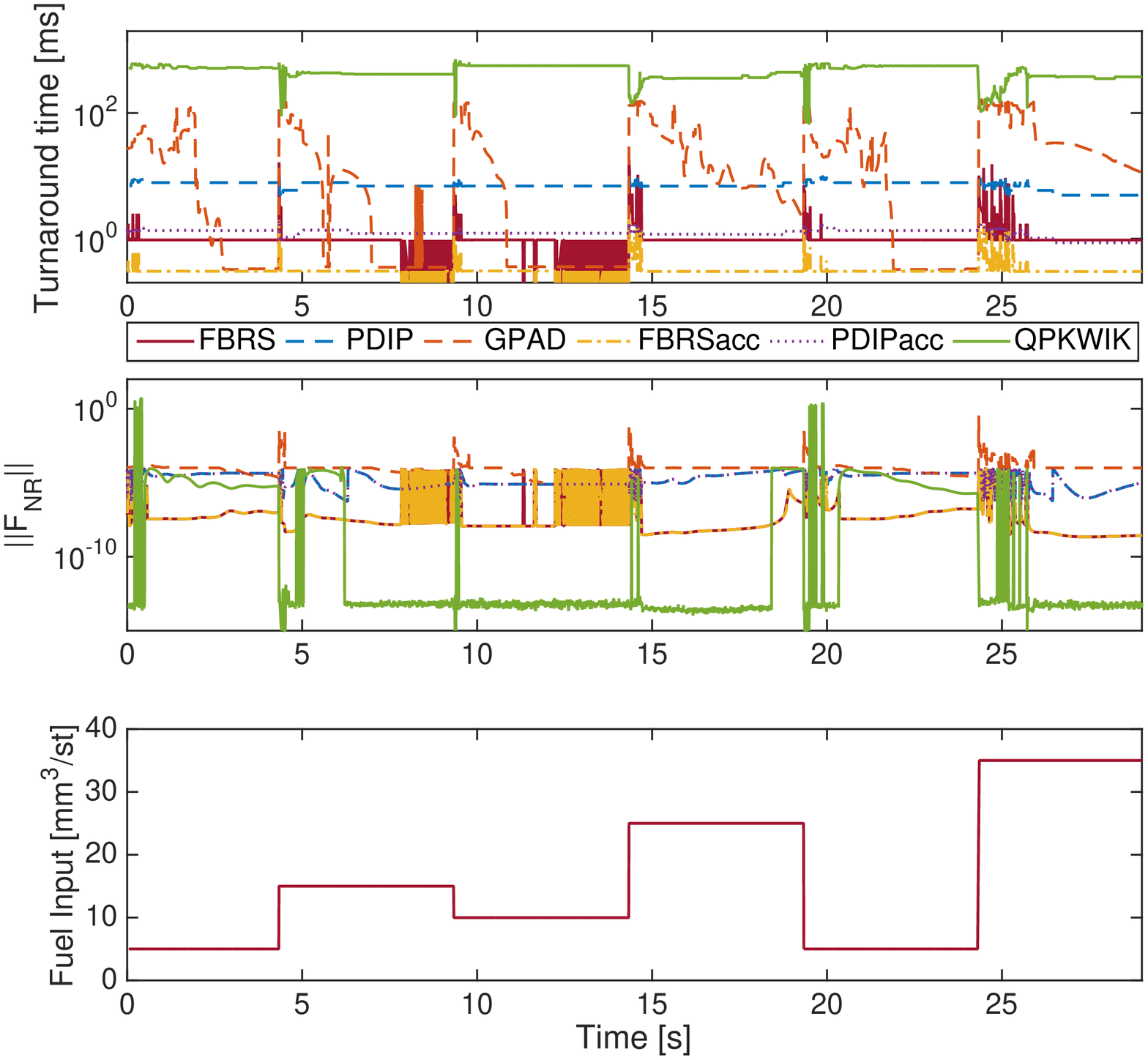}
	\caption{Measured turnaround time, residual, and the excitation sequence for the embedded comparisons. All methods were warm started.}
	\label{fig:HIL_warm}
\end{figure}

\begin{table}
\centering
\caption{Summary of the embedded testing. Times are in msec. ERR indicates the maximum value of $||F_{NR}||.$}
\label{tab:HIL}
\begin{tabular}{|c|c|c|c|c|c|c|}
\hline 
& \multicolumn{3}{|c|}{WARM} &  \multicolumn{3}{|c|}{COLD} \\ \hline
     & AVE & MAX & ERR & AVE & MAX & ERR\\ \hline
FBRS & $1.02$ & $17.68$ & $10^{-4}$ & $14.8$ & $23.8$ & $10^{-4}$\\ \hline
PDIP & $7.04$ & $11.65$ &$10^{-4}$& $9.84$ & $12.7$ & $10^{-4}$ \\\hline
GPAD & $28.6$ & $162.1$ & $0.35$ & $146$ & $163$ & $1.8$\\ \hline
FBRSacc & $0.31$ & $2.04$ & $10^{-4}$ & $2.25$ & $3.02$ & $10^{-4}$\\ \hline
PDIPacc & $1.23$ & $1.99$ & $10^{-4}$ & $1.67$ & $2.27$ & $10^{-4}$\\ \hline
QPKWIK & $465$ & $712$ & $5.4$ & $649$ & $841$ & $1.2\cdot10^{-4}$\\ \hline
\end{tabular}

\end{table}

\section{Convergence of the algorithm} \label{ss:convergence}
In this section we analyze the convergence properties of FBRS. We consider the semismooth variant of FBRS (see Remark~\ref{rmk:FBRSss}) which allows $\eps \geq0$. Several properties of the mapping $F_\eps$ and the merit function are established and then local and global convergence results are obtained.

\subsection{Key properties of the mapping} \label{ss:F_properties}
Here we establish some properties which will be needed to prove the convergence of FBRS. These properties hold for all $\eps \geq0$; in this section we focus on the case where $\eps = 0$, if $\eps > 0$ then strong semismoothness is implied by continuous differentiability and CD regularity is implied by Jacobian non-singularity.
\begin{prp}
The mapping $F_\eps: \reals^{n+q} \times \reals_{\geq0}\mapsto \reals^{n+q}$ has the following properties.
\begin{enumerate}
  \item $F_\eps$ is locally Lipschitz continuous i.e., for every $x\in \reals^{n+q}$ there exists $L_F(x) >0 $ and a neighbourhood $O$ of $x$ such that
\begin{equation} \label{eq:F_lip}
||F_\eps(x+\xi) - F_\eps(x)|| \leq L_F ||\xi||~~ \forall \xi \in O.
\end{equation}
\item The mapping $F_\eps$ is strongly semismooth.
\item The mapping $F_\eps$ is CD (Clarke Differential) regular \cite{qi1997semismooth} in the vicinity of a root $\bar{x}$ which satisfies $F_\eps(\bar{x}) = 0$. This implies that there exists $L_I$ and a neighbourhood $S$ of $\bar{x}$ such that
\begin{equation} \label{eq:CDreg}
  ||x - \bar{x}|| \leq L_I ||F(x)||~~ \forall x \in S.
\end{equation}

\item Define the error matrix $E = V-K$ as the difference between the regularized and unregularized iteration matrices and let $\delta$ denote the regularization parameter. Then
\begin{equation} \label{eq:E_bound}
   \exists~\gamma > 0 \text{ such that } ||E|| \leq \gamma \delta,~ \forall x \in \reals^{n+q}.
\end{equation}

\end{enumerate}
\end{prp}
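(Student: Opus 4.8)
The plan is to split $F_\eps$ from \eqref{eq:F} into its two natural blocks and exploit structure: the first block $\nabla_z L = Hz + f + A^T v$ is affine, while the second block is the elementwise FB function $\phi_\eps$ composed with the affine maps $v \mapsto v$ and $z \mapsto y = b - Az$. For property (1), I would observe that affine maps are globally Lipschitz, and that $\phi_\eps(a,b) = a + b - \sqrt{a^2 + b^2 + \eps^2}$ is globally Lipschitz because $(a,b) \mapsto \sqrt{a^2 + b^2 + \eps^2}$ is the distance in $\reals^3$ from $(a,b,\eps)$ to the origin and hence $1$-Lipschitz in $(a,b)$. Since Lipschitz continuity is preserved under composition with affine maps and under stacking of components, local (in fact global) Lipschitz continuity of $F_\eps$ follows immediately.

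For property (2) I would assemble strong semismoothness from standard closure properties: affine maps are strongly semismooth, and both the composition and the Cartesian stacking of strongly semismooth maps are again strongly semismooth. The only nonroutine ingredient is the scalar map $\phi_\eps$. For $\eps > 0$ the argument of the square root is bounded away from zero, so $\phi_\eps$ is $C^\infty$ with locally Lipschitz gradient and is therefore strongly semismooth; for $\eps = 0$ this reduces to the classical fact that the Euclidean norm $\sqrt{a^2 + b^2}$ is strongly semismooth. Composing with the affine arguments and stacking with the affine first block then yields strong semismoothness of $F_\eps$.

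Property (3) is the crux of the proposition and where I expect the main effort. I would establish \eqref{eq:CDreg} in two stages. First, CD regularity at a root $\bar x$, meaning that every element of $\partial_C F_\eps(\bar x)$ is nonsingular, follows from Theorem~\ref{thm:V_nonsingularity} when $\eps > 0$ (there $\partial_C F_\eps = \{\nabla_x F_\eps\}$) and from the result cited in Remark~\ref{rmk:K_compute} when $\eps = 0$. Because the C-differential is upper semicontinuous and all of its elements at $\bar x$ are nonsingular, there exist a neighbourhood $S$ of $\bar x$ and a constant $M$ such that $\|J^{-1}\| \leq M$ for every $J \in \partial_C F_\eps(x)$ and every $x \in S$. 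Second, I would combine this uniform bound with the strong semismoothness from (2): for any $J \in \partial_C F_\eps(x)$, using $F_\eps(\bar x) = 0$, semismoothness gives $\|F_\eps(x) - J(x - \bar x)\| = o(\|x - \bar x\|)$, so that $\|x - \bar x\| \leq M\|F_\eps(x)\| + M\,o(\|x - \bar x\|)$; absorbing the $o$-term for $x$ sufficiently close to $\bar x$ produces \eqref{eq:CDreg} with, for instance, $L_I = 2M$. The delicate point is precisely the passage from pointwise nonsingularity at $\bar x$ to a \emph{uniform} inverse bound on a whole neighbourhood, which is exactly what upper semicontinuity of $\partial_C F_\eps$ supplies.

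Finally, property (4) is a direct computation. Since $K = V + \nabla_x R(x,\delta)$ we have $E = V - K = -\nabla_x R(x,\delta)$, and differentiating $R(x,\delta) = [\,0^T~~\delta(v^T + y^T)\,]^T$ with $y = b - Az$ gives the constant-structure matrix
\begin{equation}
  E = -\nabla_x R(x,\delta) = \delta \begin{bmatrix} 0 & 0 \\ A & -I \end{bmatrix}.
\end{equation}
This depends on $x$ only through the scalar factor $\delta$, so setting $\gamma = \left\| \begin{bmatrix} 0 & 0 \\ A & -I \end{bmatrix} \right\|$ yields $\|E\| = \gamma\delta$ for all $x \in \reals^{n+q}$, which is \eqref{eq:E_bound}.
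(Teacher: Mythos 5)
Your proposal is correct, and for properties (1), (2), and (4) it follows essentially the same path as the paper: the paper also argues Lipschitz continuity and strong semismoothness via the affine-plus-FB structure and closure under composition/concatenation (citing \cite{fischer1992special} and \cite{fischer1997solution} where you give self-contained elementary arguments), and for (4) it performs the same direct computation, quoting the slightly looser constant $\gamma = 1 + \|A\|$ where you obtain the exact value $\sqrt{1+\|A\|^2}$ --- both satisfy \eqref{eq:E_bound}. The genuine divergence is in property (3), which you rightly identify as the crux. The paper disposes of it by citation: CD regularity of the FB reformulation of the KKT system of a general nonlinear program under the SSOSC and LICQ is proven in \cite[Lemma~4.2]{fischer1992special}, and the convex QP case follows as a special case, with the error bound \eqref{eq:CDreg} then taken as a known consequence of CD regularity. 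You instead build the result from inside the paper: nonsingularity of $\partial_C F_\eps$ at the root via Theorem~\ref{thm:V_nonsingularity} for $\eps > 0$ and via the result quoted in Remark~\ref{rmk:K_compute} for $\eps = 0$, then upper semicontinuity and compact-valuedness of the C-differential to get a uniform inverse bound $\|J^{-1}\| \leq M$ on a neighbourhood, then the semismoothness estimate to absorb the $o(\|x-\bar x\|)$ term and conclude \eqref{eq:CDreg} with $L_I = 2M$. This is a standard and correct argument; what it buys is self-containedness (the implication ``CD regularity $\Rightarrow$ error bound,'' which the paper leaves implicit, is made explicit), at the cost of leaning on the Facchinei et al.\ nonsingularity theorem for the $\eps = 0$ case, which is the same external crutch the paper acknowledges in Remark~\ref{rmk:K_compute} is nontrivial. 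One subtlety you gloss over: the semismoothness estimate you invoke must hold for elements of $\partial_C F_\eps(x)$, which is a \emph{larger} set than the generalized Jacobian $\partial F_\eps(x)$ used in the paper's definition of semismoothness; this is repaired by noting that each row of a C-differential element is a generalized gradient of the corresponding scalar component, each component is strongly semismooth, and stacking the row-wise estimates gives the matrix estimate --- precisely the componentwise argument the paper itself uses inside the proof of its Theorem~3, so the gap is cosmetic rather than substantive.
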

\begin{proof}
Result 1: This follows from the Lipschitz continuity of affine functions and of the FB function \cite{fischer1992special}.

Result 2: The mapping $F_\eps$ is the concatenation of an affine function and the composition of an affine function and the Fischer-Burmeister transform. Affine functions are strongly semismooth as is the FB function \cite[Lemma 16]{fischer1997solution}. Further, the concatenation and composition of (strong) semismooth mappings are (strongly) semismooth, see, e.g., \cite[Propositions 1.73 and 1.74]{izmailov2014newton}. 

Result 3: CD regularity can be established by noting that the CD regularity of the Fisher-Burmeister mapping applied to the KKT conditions of a general nonlinear program, assuming the SSOSC and the LICQ, was proven in \cite[Lemma 4.2]{fischer1992special}. The CD regularity of the convex QPs considered in this paper then follows as a special case. 

%Note that if $\eps >0$ then CD regularity reduces to Jacobian non-singularity which holds by Theorem~\ref{thm:V_nonsingularity}.

Result 4: This bound can be directly computed by inspecting \eqref{eq:newton_system} and applying the properties of norms with $\gamma = 1 + ||A||$.
\end{proof}

\subsection{Key properties of the merit function} \label{ss:merit_properties}
\begin{prp}
The merit function $\theta_\eps = \frac12||F_\eps||_2^2:\reals^{n+q} \times \reals_{\geq0} \mapsto \reals_{\geq0}$ has the following properties.
\begin{enumerate}
  \item For any $\eps \geq 0$ the merit function $\theta_\eps(x)$ is continuously differentiable.
  \item The gradient of the merit function can be computed as
  \begin{equation}
    \nabla_x \theta_\eps(x) = V^TF_\eps(x),
  \end{equation}
  for any $V \in \partial_C F_\eps(x)$. 
  \item For any $\eps \geq 0$ the merit function has a unique minimizer $\bar{x}$ which corresponds to $F_\eps(\bar{x}) = 0$.
\end{enumerate}
\end{prp}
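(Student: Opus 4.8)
The plan is to handle the three claims in turn, exploiting the split $\theta_\eps(x) = \tfrac12\|\nabla_z L(z,v)\|^2 + \tfrac12\sum_{i=1}^q \phi_\eps(v_i,y_i)^2$, in which only the second group of terms can be nonsmooth. For Claim~1 the first term is a quadratic in $x$ (since $\nabla_z L$ is affine), hence $C^\infty$; for the FB terms, when $\eps>0$ the map $\phi_\eps$ is itself $C^\infty$ because its radicand $a^2+b^2+\eps^2$ is bounded away from zero, so each $\phi_\eps^2$ is smooth. The only delicate case is $\eps=0$, where I would invoke the classical fact that the squared Fischer--Burmeister function $\tfrac12\phi(a,b)^2$ is continuously differentiable on all of $\reals^2$: its gradient extends continuously through the origin even though $\phi$ itself is nondifferentiable there. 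Assembling these pieces gives continuous differentiability of $\theta_\eps$.

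For Claim~2, at any point where $F_\eps$ is differentiable the chain rule gives $\nabla_x\theta_\eps = \nabla_x F_\eps^T F_\eps = V^T F_\eps$. To upgrade this to an arbitrary $V\in\partial_C F_\eps(x)$, I would observe that $F_\eps$ can fail to be differentiable only in the rows coming from $\phi_\eps$, and only when $\eps=0$ and $(v_i,y_i)=(0,0)$; but at such an index the corresponding entry of $F_\eps(x)$ equals $\phi(0,0)=0$. Hence in the product $V^T F_\eps(x)$ the ambiguous rows of $V$ are multiplied by zero entries of $F_\eps(x)$, so the product is single-valued, independent of the selection from $\partial_C F_\eps(x)$, and coincides with the continuous gradient produced in Claim~1.

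Claim~3 I would reduce to a root-uniqueness statement. Since $\theta_\eps\ge0$ with $\theta_\eps(x)=0$ exactly when $F_\eps(x)=0$, every root of $F_\eps$ is a global minimizer. Conversely, any local minimizer is stationary, so $V^T F_\eps=0$ by Claim~2; as $V$ is nonsingular (Theorem~\ref{thm:V_nonsingularity} for $\eps>0$, and every element of $\partial_C F_\eps$ for $\eps\ge0$ by Remark~\ref{rmk:K_compute}), $V^T$ is nonsingular and $F_\eps=0$. Thus the minimizers of $\theta_\eps$ are precisely the roots of $F_\eps$, and it remains to show there is exactly one. For $\eps=0$ this is Corollary~\ref{corr:root_unique}.

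The main obstacle is the case $\eps>0$. Here a root of $F_\eps$ satisfies $v_i>0$, $y_i>0$, and $v_iy_i=\tfrac12\eps^2$ together with $Hz+f+A^T v=0$, which are exactly the central-path conditions of~\eqref{eq:QP}. I would recognize these as the optimality conditions of the strictly convex log-barrier problem $\min_{Az<b}\ \tfrac12 z^THz+f^Tz-\tfrac12\eps^2\sum_i\log(b_i-A_iz)$, whose Hessian $H+\tfrac12\eps^2 A^T\mathrm{diag}(y_i^{-2})A$ is positive definite under Assumption~\ref{assump:ker_intersection} by the same argument as in Theorem~\ref{thm:V_nonsingularity}. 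Strict convexity yields at most one minimizer, existence follows from coercivity of this barrier objective on the (nonempty, by A1) strictly feasible set, and setting $v_i=\tfrac12\eps^2/y_i$ recovers the unique root. The technical care needed to verify coercivity and compact sublevel sets under A1--A3 is where I expect the real work to lie; the nonsmooth bookkeeping in Claims~1 and~2 is routine by comparison.
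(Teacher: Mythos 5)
Your Claims 1 and 2 follow essentially the paper's own route: the paper likewise reduces everything to the observation that a row of $V\in\partial_C F_\eps(x)$ can be multivalued only at indices where $\eps=0$ and $(v_i,y_i)=(0,0)$, at which the corresponding entry of $F_\eps(x)$ vanishes, so the product $V^TF_\eps(x)$ is single-valued; your appeal to the classical $C^1$ property of the squared Fischer--Burmeister function is an equivalent shortcut to the paper's explicit ``zeroing out'' computation. The genuine divergence is Claim 3. The paper's proof consists only of the first half of yours: it shows that stationary points of $\theta_\eps$ are roots of $F_\eps$ (nonsingularity of $V$ via Theorem~\ref{thm:V_nonsingularity} for $\eps>0$, and \cite{facchinei1998regularity} for $\eps=0$), and it never establishes existence or uniqueness of that root when $\eps>0$. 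You supply exactly this missing piece by identifying the roots of $F_\eps$, $\eps>0$, with the critical points of the log-barrier problem $\min_{Az<b}\ \tfrac12 z^THz+f^Tz-\tfrac12\eps^2\sum_i\log(b_i-A_iz)$, whose strict convexity under (A\ref{assump:ker_intersection}) gives uniqueness and whose coercivity gives existence; this makes your argument strictly more complete than the paper's. Two points need patching in your sketch: (i) nonemptiness of the strictly feasible set follows from feasibility of $z^*$ together with the LICQ (A\ref{assump:LICQ}), which yields a direction $d$ with $A_{I_a(z^*)}d<0$ so that $z^*+td$ is strictly feasible for small $t>0$ --- it does not follow from (A\ref{assump:SSOSC}) alone, as you state; (ii) the deferred coercivity is where (A\ref{assump:SSOSC}) actually enters: for any recession direction $d\neq0$ of the feasible set with $Hd=0$, the value $f^Td$ must be strictly positive, since $f^Td<0$ would contradict optimality of $z^*$ and $f^Td=0$ would contradict uniqueness of the QP minimizer (which the paper derives from the SSOSC and convexity); positivity of the recession function in every nonzero direction then gives compact sublevel sets of the barrier objective, hence existence. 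With those two repairs your route is sound and proves more than the paper's own proof does.
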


\begin{proof}

Result 1: If $\eps >0$ then $F_\eps$ is continuously differentiable so we consider the case when $\eps = 0$. Following \cite{facchinei1997new}, consider $\partial \theta^T = \partial F^T F \subseteq \partial_C F^T F$, which holds by the calculus of the Generalized Jacobian (\cite{clarke1990optimization}, Theorems 2.6.6, 2.2.4). Expanding the product we obtain
\begin{equation} \label{eq:grad_theta}
  \partial \theta^T = \begin{bmatrix}
    H \nabla_zL + A^T C \phi\\
    A\nabla_z L + D\phi
  \end{bmatrix},
\end{equation}
the product $w = C\phi$, when written elementwise, is of the form $w_i = \gamma_i \phi(y_i,v_i)$. Since $\gamma_i$ is multivalued only if $(v_i,y_i) = 0$, (see Remark~\ref{rmk:K_compute}) which in turn implies that $\phi(v_i,y_i) = 0$, the multivalued elements of $w_i = \gamma_i \phi(y_i,v_i)$ are ``zeroed out''. The same argument holds for $D\phi$ and thus the products $C\phi$ and $D\phi$ must be single valued implying that $\partial \theta^T = \nabla_x \theta$ is continuously differentiable \cite[Corollary to Theorem 2.2.4]{clarke1990optimization}. 

Result 2: Since $\partial \theta = \partial_C \theta \subseteq F^T \partial_C F$ and $F^T V$ is a singleton for any $V \in \partial_C F$, we must have that $\nabla_x \theta = \partial \theta^T = V^T F$ for any $V \in \partial_C F$.

Result 3: The necessary conditions for minimizing the merit function are $\nabla_x \theta_\eps = V^T F_\eps(x) = 0$ for any $V \in \partial_C F_\eps(x)$. If $\eps > 0$ then $V$ is always nonsingular (Theorem~\ref{thm:V_nonsingularity}) and $V^T F_\eps = 0$ if and only if $F_\eps(x) = 0$. If $\eps = 0$ then \cite[Theorem 4.4]{facchinei1998regularity} can be invoked in place of Theorem~\ref{thm:V_nonsingularity} to obtain the same result.
\end{proof}

\subsection{Asymptotic convergence}
This section focuses on local convergence of FBRS for some fixed $\eps \geq 0$. We will use $k \in \mathbb{Z}_+$ as a superscript as the iteration counter for the algorithm. Let $x^*$ denote the root of $F_\eps$ which exists and is unique under our assumptions so that $F_\eps^* = F_\eps(x^*) = 0$. We define the error $e^k = x^k -x^*$, and the matrices $V \in \partial_C F_\eps(x^k,\eps)$ and $K = V + \nabla_xR(x^k,\delta)$.

\begin{thm} \label{thrm:asym_conv}
Let $\{x^k\}$, $\{\Delta x^k\}$ be generated by FBRS and pick any fixed $\eps \geq 0$. Then there exists $\eta>0$ and a neighbourhood $U$ of the root $x^*$ of $F_\eps$, such that if $x^{0} \in U$ then the bound $||e^{k+1}|| \leq \eta ||e^k||^2$ holds, and $\{x^k\} \in U $ converges quadratically to $x^*$.
\end{thm}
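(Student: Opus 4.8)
The plan is to treat FBRS locally as a perturbed generalized Newton iteration and invoke the standard semismooth Newton machinery \cite{qi1993nonsmooth,qi1997semismooth}, with the regularization term $E_k = V_k - K_k$ playing the role of a vanishing perturbation. Since recovery of the unit steplength $t_k = 1$ near $x^*$ is treated separately (Theorem~\ref{thm:q_recovery}), I would analyze the pure step $x^{k+1} = x^k - K_k^{-1}F_\eps(x^k)$ and argue at the end that it coincides with the iterates of Algorithm~\ref{algo:FBRS} once the iterates are close enough. Writing $e^k = x^k - x^*$, premultiplying by $K_k$, and using $F_\eps(x^*)=0$ together with $K_k = V_k - E_k$ gives the error identity
\begin{equation}
K_k e^{k+1} = \big[\, V_k e^k - (F_\eps(x^k) - F_\eps(x^*)) \,\big] - E_k e^k ,
\end{equation}
so the objective reduces to showing that both bracketed quantities are $O(\|e^k\|^2)$ and that $\|K_k^{-1}\|$ is uniformly bounded near $x^*$.

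First I would bound the two right-hand terms. For the first, I invoke strong semismoothness (Result~2 of the mapping proposition): applying its defining estimate at $x^*$ with increment $\xi = e^k$ and the particular element $V_k \in \partial_C F_\eps(x^k)$ yields $\|V_k e^k - (F_\eps(x^k) - F_\eps(x^*))\| = O(\|e^k\|^2)$. For the second term, the decisive observation is the regularization update in Step~\ref{step:reg_update}, which enforces $\delta_k \leq \|F_\eps(x^k)\|$; combined with the error bound $\|E_k\| \leq \gamma \delta_k$ (Result~4) and the Lipschitz estimate $\|F_\eps(x^k)\| = \|F_\eps(x^k) - F_\eps(x^*)\| \leq L_F \|e^k\|$ (Result~1, using $F_\eps(x^*)=0$), this gives $\|E_k e^k\| \leq \gamma L_F \|e^k\|^2 = O(\|e^k\|^2)$. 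Hence $\|K_k e^{k+1}\| = O(\|e^k\|^2)$, and it is precisely the $\delta_k \leq \|F_\eps(x^k)\|$ rule that prevents the regularization from degrading the rate to merely linear.

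Second I would establish a uniform bound $\|K_k^{-1}\| \leq c$ for $x^k$ in a neighborhood of $x^*$. CD regularity (Result~3) implies that every element of $\partial_C F_\eps(x^*)$ is nonsingular; since $\partial_C F_\eps$ is locally bounded and upper semicontinuous, all $V \in \partial_C F_\eps(x)$ remain nonsingular with uniformly bounded inverse on a small neighborhood of $x^*$. Because $\|E_k\| \leq \gamma \delta_k \to 0$, a Neumann-series perturbation-of-inverses argument then transfers this uniform invertibility to $K_k = V_k - E_k$ (consistent with Corollary~\ref{corr:K_nonsingular}), after possibly shrinking the neighborhood and bounding $\delta_0$. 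Combining with the previous step gives $\|e^{k+1}\| \leq \|K_k^{-1}\|\,\|K_k e^{k+1}\| \leq \eta \|e^k\|^2$ for some constant $\eta$.

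Finally I would close with the usual contraction-on-a-ball induction: choose $U = \ball{r}{x^*}$ with $\eta r < 1$, so that $x^0 \in U$ forces $\|e^{k+1}\| \leq \eta\|e^k\|^2 < \|e^k\|$, keeping all iterates in $U$, driving $e^k \to 0$, and confirming the quadratic rate. The main obstacle I anticipate is not the semismoothness estimate, which is essentially plug-and-play, but the bookkeeping needed to obtain uniform invertibility of $K_k$: one must combine upper semicontinuity of the C-differential at $x^*$ with the perturbation bound on $E_k$, and verify that the neighborhood radius and the regularization magnitude can be chosen simultaneously small enough that the analyzed pure iteration agrees with the full FBRS iterates, which relies on the unit-step admissibility proved in Theorem~\ref{thm:q_recovery}.
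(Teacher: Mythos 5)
Your proposal is correct and follows essentially the same route as the paper's proof: the identical decomposition of the error into a strong-semismoothness term $V_k e^k - (F_\eps(x^k)-F_\eps(x^*))$ and a regularization term $E_k e^k$, the same use of $\|E_k\|\le\gamma\delta_k$ together with the update rule $\delta_k\le\|F_\eps(x^k)\|$ and Lipschitz continuity to make the perturbation $O(\|e^k\|^2)$, and the same uniform bound on $\|K_k^{-1}\|$ near $x^*$. Your treatment is in fact slightly more explicit than the paper's on two points of bookkeeping—the Neumann-series argument for uniform invertibility of $K_k$ and the contraction-on-a-ball induction keeping iterates in $U$—but these are refinements of, not departures from, the paper's argument.
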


\begin{proof}
\begin{subequations}
Consider the update equation
\begin{equation}
  ||e^{k+1} || = ||x^{k+1} - x^*||  = ||x^k - x^* + \Delta x^k||,
\end{equation}
and recall that $\Delta x^k = -K^{-1} F_\eps$. Combining these two and performing some algebraic manipulations we obtain
\begin{align}
||e^{k+1}|| & \leq ||e^k- K^{-1}F_\eps||\\
& \leq ||K^{-1}||~||Ke^k- F_\eps + F_\eps^*|| \label{eq:factorKinv}\\
& \leq M ||Ke^k- Ve^k+ Ve^k- F_\eps + F_\eps^*|| \label{eq:add_sub}\\
& \leq M ||Ke^k- Ve^k|| + M ||Ve^k- F_\eps + F_\eps^*|| \\
& \leq M ||K-V||~||e^k|| + M ||Ve^k- F_\eps + F_\eps^*||\label{eq:split},
\end{align}
where $M \geq ||K^{-1}(x)||, ~\forall x \in U_3$, where $U_3$ is any compact neighbourhood of $x^*$. The existence of $M$ is guaranteed by the non-singularity of $K$ (Corollary~\ref{corr:K_nonsingular}). The first term in \eqref{eq:split} represents the error induced by regularization, using \eqref{eq:E_bound} we have the following bound
\begin{equation} \label{eq:KV_asym}
  M ||K-V||~||e^k|| \leq \gamma M \delta^k~||e^k||.
\end{equation}
For the second term, following \cite{qi1993nonsmooth} and \cite{chen1998global}, the strong semismoothness of $F_\eps$ implies that there exists a neighbourhood $U_1$ of $x^*$ and a constant $T >0 $ such that 
\begin{align} \label{eq:semi_smooth_asym}
  ||Ve^k - F_\eps + F_\eps^*|| &\leq
   \sqrt{\sum_{i=1}^{n+q} ||V_ie^k- F_{\eps,i} + F^*_{\eps,i}}||^2 \\
   &\leq T ||e^k||^2, \quad \forall x \in U_1
\end{align}
Combining \eqref{eq:split}, \eqref{eq:semi_smooth_asym}, \eqref{eq:KV_asym}, that $\delta^k \leq  ||F_\eps(x)||$ by the construction of FBRS (Step~\ref{step:reg_update}), and the Lipshitz continuity of $F$ \eqref{eq:F_lip} yields
\begin{equation} \label{eq:eta_bar}
  ||e^{k+1}|| \leq M(\gamma L_F^2 + T)||e^k||^2 ~~\forall x \in U,
\end{equation}
where $U = U_1 \cap U_2 \cap U_3$, and $U_2$, $L_F = L_F(x^*)$ are the neighbourhood and Lipschitz constant in \eqref{eq:F_lip}.  Letting $\eta = M(\gamma L_F^2 + T)$ completes the proof.
\end{subequations}
\end{proof}

% \begin{rmk}
% As previously noted in \cite{fischer1992special} no assumption about strict complimentary slackness is required for quadratic convergence.
% \end{rmk}

\subsection{Global convergence}
This section provides a short proof of the global convergence properties of FBRS.

\begin{lmm} \label{lmm:step_acceptance}
Let $\{x^k\}$ and $\{\Delta x^k\}$ be generated by FBRS. Assume that $\theta_\eps(x^k) \neq 0$ and $ 1-\gamma ||K^{-1}|| \delta^k > \sigma$, where $\gamma$ is the constant in \eqref{eq:E_bound}. Then there exists a step length $t^k \in (0,1]$ such that the Armijo condition, 
\begin{equation} \label{eq:armijo}
\theta_\eps(x^k+t^k\Delta x^k) \leq  (1-2t^k\sigma) \theta_\eps(x^k),
\end{equation}
is satisfied.
\end{lmm}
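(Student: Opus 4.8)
The plan is to show that the Newton--type direction $\Delta x^k = -K^{-1}F_\eps(x^k)$ is a descent direction for the merit function $\theta_\eps$ at $x^k$, with a slope strictly steeper than the Armijo target line demands, and then to invoke the standard backtracking existence argument, which is available because $\theta_\eps$ is continuously differentiable (Result 1 of the merit-function proposition).

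First I would compute the directional derivative of $\theta_\eps$ along $\Delta x^k$. Using $\nabla_x\theta_\eps = V^T F_\eps$ (Result 2) for the same $V \in \partial_C F_\eps(x^k)$ that defines $K = V + \nabla_x R(x^k,\delta^k)$, and substituting $\Delta x^k = -K^{-1}F_\eps$, I obtain $\nabla_x\theta_\eps(x^k)^T\Delta x^k = -F_\eps^T V K^{-1}F_\eps$. Writing $V = K + E$ with $E = V - K$ the regularization error matrix of Result 4, this separates into
\[
  \nabla_x\theta_\eps(x^k)^T\Delta x^k = -||F_\eps||^2 - F_\eps^T E K^{-1}F_\eps .
\]
The second, perturbation, term is the only thing standing between us and the unregularized identity $\nabla_x\theta_\eps^T\Delta x^k = -||F_\eps||^2 = -2\theta_\eps$.

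Next I would control that term with the error bound $||E|| \leq \gamma\delta^k$ (Result 4) and submultiplicativity of the norm, giving $|F_\eps^T E K^{-1}F_\eps| \leq \gamma\,||K^{-1}||\,\delta^k\,||F_\eps||^2$. Since $||F_\eps||^2 = 2\theta_\eps$, this yields
\[
  \nabla_x\theta_\eps(x^k)^T\Delta x^k \leq -2\bigl(1 - \gamma||K^{-1}||\delta^k\bigr)\theta_\eps(x^k) \leq -2\sigma\,\theta_\eps(x^k),
\]
where the last step uses the hypothesis $1 - \gamma||K^{-1}||\delta^k > \sigma$. Because $\theta_\eps(x^k)\neq 0$ this is strictly negative, so $\Delta x^k$ is a genuine descent direction with slope below $-2\sigma\theta_\eps(x^k)$. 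To finish, a first-order Taylor expansion gives $\theta_\eps(x^k + t\Delta x^k) = \theta_\eps(x^k) + t\,\nabla_x\theta_\eps(x^k)^T\Delta x^k + o(t)$; subtracting the Armijo right-hand side $(1-2t\sigma)\theta_\eps(x^k)$ and inserting the descent bound produces a leading term $-2t\bigl(1 - \gamma||K^{-1}||\delta^k - \sigma\bigr)\theta_\eps(x^k) + o(t)$, whose coefficient is strictly negative by hypothesis. Hence the difference is nonpositive for all sufficiently small $t>0$, so some admissible $t^k\in(0,1]$ satisfies \eqref{eq:armijo}.

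The main obstacle is the descent-direction step: the regularization perturbation $E$ could in principle flip the sign of the directional derivative, and the only safeguard is the smallness condition $1 - \gamma||K^{-1}||\delta^k > \sigma$. The crux is verifying that this hypothesis is precisely what absorbs the $\gamma||K^{-1}||\delta^k$ factor and leaves a residual slope strictly below $-2\sigma\theta_\eps$, rather than merely guaranteeing descent; once that sufficient-descent inequality is in hand, the remainder is the textbook Armijo existence result for a $C^1$ merit function and requires no further structure.
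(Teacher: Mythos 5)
Your proof is correct and follows the same core path as the paper's: both identify the directional derivative $\nabla_x\theta_\eps^T\Delta x = -F_\eps^T V K^{-1}F_\eps$, split off the regularization error via $V = K + E$ (the paper writes this as $VK^{-1} = I + EK^{-1}$), and absorb the perturbation using $||E|| \leq \gamma\delta^k$ together with the hypothesis $1-\gamma||K^{-1}||\delta^k > \sigma$. Where you diverge is the treatment of the higher-order term. You invoke a qualitative first-order Taylor expansion with an $o(t)$ remainder, which establishes existence of an acceptable $t^k\in(0,1]$ --- exactly the lemma's literal claim. The paper instead writes the increment exactly via the fundamental theorem of calculus and bounds the integral term using the Lipschitz constant $L_\theta$ of $\nabla_x\theta_\eps$, obtaining the quadratic estimate $\Delta\theta(t) \leq -2t(1-\gamma M \delta)\theta_\eps + t^2 L_\theta M^2 \theta_\eps$ with $M = ||K^{-1}||$, hence an explicit threshold $\hat{t} = 2(1-\gamma M \delta - \sigma)/(L_\theta M^2)$ and the conclusion that backtracking returns some $t \geq \beta\hat{t}$. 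That quantitative lower bound is not cosmetic: the global convergence argument (Theorem~\ref{thm:global_convergence}) concludes $\theta_\eps(x^k)\to 0$ from the strict decrease factors $(1-2t_k\sigma)$, and this implicitly requires the accepted steps $t_k$ to be bounded away from zero; under your $o(t)$ version alone, $t_k$ could shrink to zero and $\{\theta_\eps(x^k)\}$ could stall at a positive limit. So your proof is a valid and slightly more elementary proof of the lemma as stated, but the paper's Lipschitz-based estimate carries extra information that the downstream theorem relies on.
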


\begin{proof}
Consider a fixed but arbitrary iteration $k$; from this point forward we drop the iteration superscript to steamline the presentation of the proof. As $\theta_\eps$ is continuously differentiable (see section~\ref{ss:merit_properties}) we can invoke the fundamental theorem of calculus to write that
\begin{subequations}
\begin{align}
  \theta_\eps(x + t\Delta x) &= \theta_\eps(x) + t \nabla_x \theta_\eps(x)^T \Delta x \nonumber\\
   &+ t \int_0^1 [\nabla_x \theta(x+t\Delta x \lambda) - \nabla\theta_\eps(x)]^T\Delta x ~ d\lambda,
\end{align} 
defining $\Delta \theta(t) = \theta_\eps(x + t\Delta x) - \theta_\eps(x)$ and using that $\nabla_x \theta_\eps(x) = V^T F(x)$, for any $V \in \partial_C F(x)$ (see section~\ref{ss:merit_properties}) yields that
\begin{align}
\Delta \theta(t) &= t \nabla_x \theta_\eps^T \Delta x + t \int_0^1 [\nabla_x \theta_\eps(x+t\Delta x \lambda) - \nabla\theta_\eps]^T\Delta x ~ d\lambda,\\
=& -t F^T V K^{-1} F + t \int_0^1 [\nabla_x \theta_\eps(x+t\Delta x \lambda) - \nabla\theta_\eps] ^T\Delta x ~ d\lambda.
\end{align}
Substituting in $VK^{-1} = I + EK^{-1}$ yields
\begin{align}
\Delta \theta (t) &\leq-t ||F||^2(1-||E K^{-1}||) \nonumber \\
& + t \int_0^1 [\nabla_x \theta_\eps(x+t\Delta x \lambda) - \nabla\theta_\eps]^T\Delta x~d\lambda,
\end{align}
rearranging the bound on $||E||$ \eqref{eq:E_bound}, letting $M(\delta) = ||K^{-1}||$, and taking norms of the remaining positive terms yields the following estimate

\begin{align}
\Delta \theta(t) &\leq -2t \theta_\eps (1- \gamma M \delta) \nonumber \\
&+ t \int_0^1 ||\nabla_x \theta_\eps(x+t\Delta x \lambda) - \nabla\theta_\eps ||~||\Delta x|| d\lambda.
\end{align}
Since $\nabla_\eps \theta$ is Lipschitz, see \eqref{eq:grad_theta}, letting $L_\theta$ be its Lipschitz constant and integrating we obtain that 
\begin{align}
\Delta \theta(t) &\leq -2t \theta_\eps (1- \gamma M \delta) + t \int_0^1 t L_\theta ~||\Delta x||^2 d\lambda,\\
&\leq -2t \theta_\eps (1- \gamma M \delta) + \frac12 t^2 L_\theta ~||K^{-1}F||^2,\\
&\leq -2t \theta_\eps (1- \gamma M \delta) + t^2 L_\theta ~M^2 \theta_\eps.
\end{align}
\end{subequations}
From the last inequality we can conclude that there exists a sufficiently small $t$ such that the Armijo condition is satisfied, in particular any $t < \hat{t}$ where
\begin{equation}
  \hat{t} = \frac{2(1-\gamma M \delta - \sigma)}{L_\theta M^2},
\end{equation}
will be accepted by the algorithm. Further, FBRS uses a backtracking line search with backtracking factor $\beta \in (0,1)$ so we can conclude that $t \geq \beta \hat{t}$ bounding $t$ away from zero.
% \begin{equation}
%   t \geq \beta \frac{2(1-\gamma M \delta - \sigma)}{L_\theta M^2} > 0,
% \end{equation}
\end{proof}

\begin{corollary} \label{corr:small_delta_exists}
For all $x \in \{x^k\}$ there exists $\bar{\delta}$ such that $\Delta x $ will be a direction of sufficient decrease for $\theta_\eps$ if $\delta < \bar{\delta}$.
\end{corollary}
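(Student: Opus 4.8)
The plan is to reduce the claim to the sufficient condition already isolated in Lemma~\ref{lmm:step_acceptance}, and then to show that this condition can always be met by shrinking $\delta$. Lemma~\ref{lmm:step_acceptance} establishes that whenever $\theta_\eps(x) \neq 0$ and the inequality $1 - \gamma ||K^{-1}|| \delta > \sigma$ holds, the backtracking line search terminates with a step $t \in (0,1]$ satisfying the Armijo condition \eqref{eq:armijo}; that is, $\Delta x = -K^{-1}F_\eps$ is a direction of sufficient decrease. For any $x \in \{x^k\}$ we may assume $\theta_\eps(x) \neq 0$, since otherwise $F_\eps(x) = 0$, the point is the root of the current subproblem, and there is nothing to prove. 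It therefore suffices to exhibit a threshold $\bar\delta$ (depending on $x$) guaranteeing $\gamma ||K^{-1}(x,\delta)|| \delta < 1-\sigma$ for all $\delta < \bar\delta$.

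The only subtlety is that $K$ itself depends on $\delta$, so $||K^{-1}||$ could in principle grow as $\delta$ shrinks. First I would observe that the entries of $K = V + \nabla_x R(x,\delta)$ depend continuously on $\delta$ through the diagonal matrices $C$ and $D$ of \eqref{eq:CD_smooth}, each of whose entries carries the additive term $\delta$; hence $K(x,\cdot)$ is continuous and $K(x,0) = V$. By Theorem~\ref{thm:V_nonsingularity} (for $\eps > 0$, or by Remark~\ref{rmk:K_compute} and \cite[Theorem 4.4]{facchinei1998regularity} for $\eps = 0$) the unregularized matrix $V$ is nonsingular, so $||V^{-1}||$ is finite. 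Since matrix inversion is continuous on the open set of nonsingular matrices, $||K^{-1}(x,\delta)|| \to ||V^{-1}||$ as $\delta \to 0$, and consequently there exist $\delta_1 > 0$ and a finite bound $\bar M$ with $||K^{-1}(x,\delta)|| \leq \bar M$ for all $\delta \in [0,\delta_1]$.

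With this uniform bound in hand the conclusion is immediate: set $\bar\delta = \min\{\delta_1,\ (1-\sigma)/(\gamma \bar M)\}$. For any $\delta < \bar\delta$ we have $\gamma ||K^{-1}|| \delta \leq \gamma \bar M \delta < 1-\sigma$, so the hypothesis $1 - \gamma ||K^{-1}|| \delta > \sigma$ of Lemma~\ref{lmm:step_acceptance} is satisfied and $\Delta x$ is a direction of sufficient decrease. I expect the main obstacle to be the boundedness argument in the second paragraph, namely ruling out that $||K^{-1}||$ blows up as $\delta \to 0$. This is precisely where the nonsingularity of the unregularized iteration matrix $V$ (rather than merely of $K$ for $\delta > 0$) is essential: it pins down a finite limiting value $||V^{-1}||$, after which continuity of the inverse does the rest.
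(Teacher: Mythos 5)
Your proposal is correct and follows essentially the same route as the paper's own proof: both reduce the claim to the sufficient condition $1-\gamma\|K^{-1}\|\delta > \sigma$ from Lemma~\ref{lmm:step_acceptance}, and both dispose of the $\delta$-dependence of $K$ by noting that $V$ is nonsingular and that $\|K^{-1}\| \to \|V^{-1}\|$ as $\delta \to 0$ by continuity of matrix inversion, so that $\gamma\delta\|K^{-1}\| \to 0$. Your write-up is in fact slightly more explicit than the paper's (you construct $\bar\delta$ concretely and handle the $\theta_\eps(x)=0$ case), but the underlying argument is the same.
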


\begin{proof}
A sufficient condition for $\Delta x$ to be a direction of sufficient descent for $\theta_\eps$ is $1- \gamma M \delta = 1- \gamma \delta ||(V-E(\delta))^{-1}||> \sigma$. Since $V$ is always invertible (Theorem~\ref{thm:V_nonsingularity}) and $K = V-E$ is invertible for any $\delta \geq 0$ (Corollary~\ref{corr:K_nonsingular}) then $||(V-E)^{-1}|| \to ||V^{-1}||$ as $\delta \to 0$ and $\Gamma(\delta) = \gamma \delta ||(V-E)^{-1}|| \to 0$ as $\delta \to 0$. The existence of $\bar{\delta}$ then follows from the continuity of $\Gamma(\delta)$.
\end{proof}

\begin{thm} \label{thm:global_convergence}
Let the assumptions in section~\ref{ss:QP_formulation} and Lemma~\ref{lmm:step_acceptance} hold and let the sequence $\{x^k\}$ be generated by FBRS. Then for all initial points, $x^0 \in \reals^{n+q}$, the sequence $\{x^k\}$ is well defined and $\{x^k\} \rightarrow x^*$ as $k \rightarrow \infty$.
\end{thm}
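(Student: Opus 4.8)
The plan is to read FBRS as a damped generalized Newton method whose globalization is driven by the merit function $\theta_\eps$, and to run the classical line-search convergence argument in three stages: well-posedness of the iteration, monotone decrease of $\theta_\eps$, and the passage from $\theta_\eps(x^k)\to 0$ to $x^k \to x^*$. All the ingredients needed for this have already been assembled in Sections~\ref{ss:F_properties} and~\ref{ss:merit_properties} and in Lemma~\ref{lmm:step_acceptance}, so the work is mostly in stitching them together and in one delicate boundedness argument.

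First I would establish that the sequence is well defined. By Corollary~\ref{corr:K_nonsingular} the matrix $K$ is nonsingular at every iterate, so the step $\Delta x^k = -K^{-1}F_\eps(x^k)$ is uniquely determined. By Corollary~\ref{corr:small_delta_exists} the regularization parameter can be kept below the threshold that makes $\Delta x^k$ a direction of sufficient descent for $\theta_\eps$ (this is exactly the ``$\delta_k$ small enough'' convention built into Algorithm~\ref{algo:FBRS}), and Lemma~\ref{lmm:step_acceptance} then guarantees that the backtracking line search terminates with a step $t^k \geq \beta \hat t^k > 0$. Hence each $x^{k+1} = x^k + t^k \Delta x^k$ is well defined.

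Next I would exploit monotone descent. The Armijo condition \eqref{eq:armijo} gives $\theta_\eps(x^{k+1}) \leq (1-2\sigma t^k)\theta_\eps(x^k)$, so $\{\theta_\eps(x^k)\}$ is nonincreasing and bounded below by zero, hence convergent. Summing the per-step decrease yields $\sum_k 2\sigma t^k \theta_\eps(x^k) \leq \theta_\eps(x^0) < \infty$, so that $t^k\theta_\eps(x^k) \to 0$. Provided the step lengths are bounded away from zero, this forces $\theta_\eps(x^k) \to 0$, i.e. $F_\eps(x^k)\to 0$. Since $F_\eps$ has the unique root $x^*$ (Corollary~\ref{corr:root_unique} together with the merit-function properties of Section~\ref{ss:merit_properties}), continuity of $F_\eps$ shows that every limit point of $\{x^k\}$ is a root and therefore equals $x^*$; a bounded sequence with a single limit point converges to it, and the CD-regularity bound \eqref{eq:CDreg} can be used both to conclude $x^k \to x^*$ and to hand off cleanly to the local regime of Theorem~\ref{thrm:asym_conv}.

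The main obstacle, and the step I expect to require genuine work, is the clause ``provided the step lengths are bounded away from zero.'' From Lemma~\ref{lmm:step_acceptance} one has $\hat t^k = 2(1-\gamma M_k\delta_k - \sigma)/(L_\theta M_k^2)$ with $M_k = \|K_k^{-1}\|$, so a uniform lower bound on $t^k$ requires uniform upper bounds on $\|K_k^{-1}\|$ and on the Lipschitz constant of $\nabla\theta_\eps$ along the iterates. Both are available on any compact set, so it suffices to confine $\{x^k\}$ to a compact region, and monotonicity already traps the iterates in the sublevel set $\{x : \theta_\eps(x) \leq \theta_\eps(x^0)\}$. The crux is therefore the boundedness of this sublevel set, which is subtle because $\theta_\eps$ need not be coercive: along any direction $d \geq 0$ with $A^T d = 0$ the residual $F_\eps$ stays bounded as $\|x\|\to\infty$, so level sets can be unbounded even under Assumptions~\ref{assump:SSOSC}--\ref{assump:ker_intersection}. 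I would attempt to close this gap with a direct growth estimate on $F_\eps$ that uses Assumption~\ref{assump:ker_intersection} and the strict positivity of $C,D$ enforced by smoothing and regularization to rule out escape along such rays while $\theta_\eps$ is being driven down, rather than by appealing to coercivity; establishing that the iterates remain bounded is the heart of the proof, and once it is in place the remaining steps are routine.
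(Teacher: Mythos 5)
Your proposal follows the same architecture as the paper's proof --- well-posedness from Corollary~\ref{corr:K_nonsingular}, sufficient descent for small $\delta$ from Corollary~\ref{corr:small_delta_exists}, step acceptance from Lemma~\ref{lmm:step_acceptance}, monotone decrease of $\theta_\eps$, and identification of the limit with the unique root --- so the comparison comes down to the step you flag as the crux. There the paper is actually \emph{weaker} than your write-up: it notes that $\{\theta_\eps(x^k)\}$ is strictly decreasing and bounded below, hence convergent to some $\theta^*\geq 0$, and then concludes $\theta^*=0$ ``as $1-2t_k\sigma<1$.'' That inference is a non sequitur as stated (a decreasing positive sequence whose successive ratios are each below one need not tend to zero); it becomes valid only if the $t_k$ are bounded away from zero, which via Lemma~\ref{lmm:step_acceptance} requires bounds on $M_k=\|K_k^{-1}\|$ and on the Lipschitz constant of $\nabla\theta_\eps$ that are uniform along the iterates --- exactly the uniformity you isolate, and which the paper silently treats as given. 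Your summability argument $\sum_k t^k\theta_\eps(x^k)<\infty$ is a cleaner rendering of the same mechanism, and your observation that sublevel sets of $\theta_\eps$ can be unbounded is correct: whenever there exists $d\geq 0$, $d\neq 0$, with $A^Td=0$ (e.g., box constraints), $F_\eps$ stays bounded along the ray $(z,v+td)$, so coercivity genuinely fails.

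Where your plan would stall, however, is in trying to restore compactness of the iterates: it is not needed, and chasing it runs into the real obstruction you identified. The uniformity holds on the (possibly unbounded) sublevel set because every $x$-dependent quantity in Lemma~\ref{lmm:step_acceptance} depends on $x$ only through bounded functions of $x$. (i) On $\{\theta_\eps\leq\theta_\eps(x^0)\}$ one has $\|F_\eps\|\leq\sqrt{2\theta_\eps(x^0)}$ by definition; since $F_\eps$ is globally Lipschitz and, for $\eps>0$, $V=\nabla_xF_\eps$ is globally bounded with globally Lipschitz entries (the derivatives of $y_i/\sqrt{y_i^2+v_i^2+\eps^2}$ are of order $1/\eps$), the gradient $\nabla\theta_\eps=V^TF_\eps$ admits a single Lipschitz constant valid on all segments $[x^k,x^k+\Delta x^k]$. (ii) $K$ depends on $x$ only through the diagonal entries $(\gamma_i,\mu_i)\in[\delta,2+\delta]^2$; on this compact parameter box $K$ is invertible, since the factorization in Theorem~\ref{thm:V_nonsingularity} gives $H+A^TD^{-1}CA\succeq H+\tfrac{\delta}{2+\delta}A^TA\succ0$ under Assumption~\ref{assump:ker_intersection}, so $\|K^{-1}\|\leq M(\delta)$ uniformly in $x$. (iii) In the contradiction scenario $\theta_\eps(x^k)\to\theta^*>0$, Step~\ref{step:reg_update} of Algorithm~\ref{algo:FBRS} keeps $\delta^k=\min(\delta_0,\|F_\eps(x^k)\|)\geq\min(\delta_0,\sqrt{2\theta^*})>0$, which is exactly the positive floor that the bound $M(\delta)$ requires. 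Items (i)--(iii), together with the paper's standing convention that $\delta$ is small enough for the hypotheses of Lemma~\ref{lmm:step_acceptance} to hold, bound $t^k$ below, and your summability argument then forces $\theta^*=0$ --- no boundedness of iterates is required. One caveat survives and afflicts both your proposal and the paper: converting $\theta_\eps(x^k)\to0$ into $x^k\to x^*$ is not automatic when level sets are unbounded, since CD regularity \eqref{eq:CDreg} is only local and your limit-point argument presupposes a bounded sequence; one must exclude ``roots at infinity,'' e.g., by strict feasibility of $Az\leq b$, which rules out escape directions $d\geq0$, $A^Td=0$, $b^Td\leq0$ by a theorem of the alternative --- an assumption the paper never states and a step its proof also elides.
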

\begin{proof}
We begin by noting that, by Corollary~\ref{corr:K_nonsingular}, the iteration matrix $K(x,\eps)$ is always non-singular; as a result the sequence $\{x^k\}$ generated by FBRS is unique and well defined for any initial condition.

Let $\Delta x(x^k,\delta^k)$ be generated by FBRS. Consider the merit function $\theta_\eps$; if $\theta_\eps(x) > 0$ then if $\delta$ is chosen sufficiently small, which is always possible by Corollary~\ref{corr:small_delta_exists}, then $\Delta x$ will be a direction of sufficient descent for $\theta_\eps$. Thus invoking Lemma~\ref{lmm:step_acceptance} we have that
\begin{equation}
  \theta_\eps(x^{k+1}) < (1-2t_k\sigma) \theta_\eps(x^k),
\end{equation}
as $t_k\in (0,1]$ and $\sigma \in (0,0.5)$ $\{\theta_\eps(x^k)\}$ is a strictly decreasing sequence. Since $\theta_\eps$ is bounded from below by zero $\{\theta_\eps(x^k)\}$ must converge to some $\theta^* \geq 0$ as $k \to \infty$ and, as $1-2t_k\sigma <1$, we must have that $\theta^* = 0$. Noting that $\theta_\eps(x) = 0$ if and only if $F_\eps(x) = 0$ and that $F_\eps(x) = 0$ if and only if $x = x^*$ completes the proof.
\end{proof}

% \begin{rmk}
% The key property for globalization of the algorithm is that $V$ is always non-singular. This excludes the existence of stationary points of the merit function that do not satisfy $F_\eps(x) = 0$.
% \end{rmk}

\subsection{Acceptance of unit steps}
In this section we prove that once the iterates are sufficiently close to the solution then the linesearch will accept unit steps, allowing FBRS to recover the fast asymptotic convergence rates of Theorem~\ref{thrm:asym_conv}.
\begin{thm} \label{thm:q_recovery}
Let the assumptions in section~\ref{ss:QP_formulation} hold and let $\{x^k\}$ and $\{\Delta x^k\}$ be generated by FBRS. Then there exists a neighbourhood $X$ of the solution $x^*$ such that $\theta_\eps(x^k + \Delta x^k) \leq (1-2\sigma) \theta_\eps(x^k),~ \forall x \in X$, implying that the linesearch will accept unit steps.
\end{thm}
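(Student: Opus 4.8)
The plan is to show that for the \emph{unit} step the merit-function ratio $\theta_\eps(x^k+\Delta x^k)/\theta_\eps(x^k)$ can be driven below $1-2\sigma$ by taking $x^k$ close enough to $x^*$, so that the backtracking loop in Algorithm~\ref{algo:FBRS} exits immediately with $t^k=1$. First I would write the trial point as $x^{k+1}=x^k+\Delta x^k$ and, since $F_\eps(x^*)=0$, bound the numerator with the local Lipschitz continuity of $F_\eps$ from \eqref{eq:F_lip}: this gives $\|F_\eps(x^{k+1})\|\le L_F\,\|x^{k+1}-x^*\|$ and hence $\theta_\eps(x^{k+1})\le \tfrac12 L_F^2\,\|x^{k+1}-x^*\|^2$.

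The next step is to control $\|x^{k+1}-x^*\|$. Because $x^{k+1}=x^k+\Delta x^k$ is exactly the full Newton step $\Delta x^k=-K^{-1}F_\eps$ analyzed in Theorem~\ref{thrm:asym_conv}, the estimate \eqref{eq:eta_bar} applies verbatim to the unit step and yields $\|x^{k+1}-x^*\|\le \eta\,\|x^k-x^*\|^2$ on the neighbourhood $U$ of that theorem. I would then convert the error into a residual using the CD-regularity bound \eqref{eq:CDreg}, namely $\|x^k-x^*\|\le L_I\,\|F_\eps(x^k)\|$, valid on a neighbourhood $S$ of $x^*$.

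Chaining these three inequalities gives $\theta_\eps(x^{k+1})\le \tfrac12 L_F^2\eta^2 L_I^4\,\|F_\eps(x^k)\|^4 = L_F^2\eta^2 L_I^4\,\|F_\eps(x^k)\|^2\,\theta_\eps(x^k)$, so the decrease ratio is bounded by $C\,\|F_\eps(x^k)\|^2$ with $C=L_F^2\eta^2 L_I^4$. Since $F_\eps$ is continuous with $F_\eps(x^*)=0$, and $\sigma<\tfrac12$ forces $1-2\sigma>0$, I would take $X$ to be an open neighbourhood of $x^*$ contained in $U\cap S$ and in the Lipschitz neighbourhood, small enough that $C\,\|F_\eps(x^k)\|^2\le 1-2\sigma$ there. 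This is the claimed inequality $\theta_\eps(x^k+\Delta x^k)\le (1-2\sigma)\theta_\eps(x^k)$; moreover for $x^k\neq x^*$ the ratio is driven strictly below $1-2\sigma$, so the while-condition in Algorithm~\ref{algo:FBRS} fails and the unit step is accepted, while the degenerate case $x^k=x^*$ (where $\theta_\eps=0$) holds trivially and is already caught by the termination test.

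The main obstacle is not the algebra but the bookkeeping: I must verify that the quadratic estimate \eqref{eq:eta_bar} describes the \emph{undamped} step, i.e.\ that the derivation of Theorem~\ref{thrm:asym_conv} never inserts a step-length factor (it works directly with $\Delta x^k=-K^{-1}F_\eps$), so the bound genuinely certifies the unit step rather than the line-searched iterate. Intersecting $U$, $S$, and the Lipschitz neighbourhood into a single $X$ on which all three estimates hold simultaneously is the only other point requiring care; everything else is routine norm chaining.
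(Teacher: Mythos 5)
Your proposal is correct and follows essentially the same route as the paper's proof: a Lipschitz bound on $F_\eps$ at the trial point, the quadratic estimate of Theorem~\ref{thrm:asym_conv} applied to the undamped step $\Delta x^k = -K^{-1}F_\eps$, and CD regularity \eqref{eq:CDreg} to relate the error to the residual, with the neighbourhood $X$ obtained by intersecting the relevant neighbourhoods and a smallness condition. The only (cosmetic) differences are that you convert both factors of $\|x^k-x^*\|^2$ into residuals via $L_I$ and define $X$ through smallness of $\|F_\eps(x^k)\|$, whereas the paper keeps one factor as $\|x-x^*\|^2$ and defines $X$ by a ball in $\|x-x^*\|$; your constant $\eta^2$ is also the correct power where the paper writes $\eta$.
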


\begin{proof}
\begin{subequations}
Choose a fixed but arbitrary iteration $k$; from this point forward we drop the iteration superscript to steamline the presentation of the proof. Consider,
\begin{align}
\theta_\eps(x+\Delta x) &= \frac12 ||F_\eps(x+\Delta x) - F_\eps(x^*)||^2,
\end{align}
using the Lipshitz continuity of $F_\eps$ and that, by Theorem~\ref{thrm:asym_conv}, there exists $\eta > 0$ such that $||x+\Delta x - x^*|| \leq \eta ||x - x^*||^2$ in some neighbourhood $U$ of $x^*$ we can conclude that
\begin{align}
\theta_\eps(x+\Delta x) &\leq \frac12 L_F^2 ||x + \Delta x - x^*||^2,\label{eq:lpF1}\\
&\leq \frac12 L_F^2 \eta ||x - x^*||^4,\label{eq:lc1}
\end{align}
for all $x \in U$. The CD regularity of $F_\eps$, see section~\ref{ss:F_properties}, implies that there exists a neighbourhood $S$ of $x^*$ and $L_I >0$ such that $||x - x^*|| \leq L_I ||F_\eps(x)||~~ \forall x \in S$, thus
\begin{align}
  \theta_\eps(x+\Delta x) &\leq \frac12 L_F^2 \eta L_I^2 ||x-x^*||^2 ||F_\eps(x)||^2,\\
  &\leq L_F^2 \eta L_I^2 ||x-x^*||^2 \theta_\eps(x), 
\end{align}
for all $x\in U\cap S$. By continuity of $||x - x^*||$ there then must exist $\hat{x}$ such that 
\begin{equation}
  L_F^2 \eta L_I^2 ||\hat{x} - x^*||^2 = 1-2\sigma,
\end{equation}
and thus we have that
\begin{equation}
  \theta_\eps(x + \Delta x) \leq (1-2\sigma) \theta_\eps(x),
\end{equation}
for all $x$ such that $||x-x^*|| \leq ||\hat{x} - x^*||$. Setting $X = \{x \in U \cap S~|~||x-x^*|| \leq ||\hat{x} - x^*|| \}$ completes the proof.
\end{subequations}
\end{proof}

\section{Conclusion}
This paper presented a regularized and smoothed Fischer-Burmeister method for solving convex QPs. The method is attractive for real-time and embedded applications since its simple to code, easy to warmstart, and its performance is competitive with other state of the art solvers. Future work includes extending the method to more general convex problems e.g., SOCPs, and considering problems with non-unique dual solutions.

% use section* for acknowledgment
\section*{Acknowledgment}
The authors would like to thank Shinhoon Kim, Marco Nicotra, and Ken Butts.

% Can use something like this to put references on a page
% by themselves when using endfloat and the captionsoff option.
\ifCLASSOPTIONcaptionsoff
  \newpage
\fi

\bibliography{fbsmooth.bib}

% biography section
% 
% If you have an EPS/PDF photo (graphicx package needed) extra braces are
% needed around the contents of the optional argument to biography to prevent
% the LaTeX parser from getting confused when it sees the complicated
% \includegraphics command within an optional argument. (You could create
% your own custom macro containing the \includegraphics command to make things
% simpler here.)
%\begin{IEEEbiography}[{\includegraphics[width=1in,height=1.25in,clip,keepaspectratio]{mshell}}]{Michael Shell}
% or if you just want to reserve a space for a photo:

% \begin{IEEEbiography}{Michael Shell}
% Biography text here.
% \end{IEEEbiography}

% % if you will not have a photo at all:
% \begin{IEEEbiographynophoto}{John Doe}
% Biography text here.
% \end{IEEEbiographynophoto}

% % insert where needed to balance the two columns on the last page with
% % biographies
% %\newpage

% \begin{IEEEbiographynophoto}{Jane Doe}
% Biography text here.
% \end{IEEEbiographynophoto}

% You can push biographies down or up by placing
% a \vfill before or after them. The appropriate
% use of \vfill depends on what kind of text is
% on the last page and whether or not the columns
% are being equalized.

%\vfill

% Can be used to pull up biographies so that the bottom of the last one
% is flush with the other column.
%\enlargethispage{-5in}

% that's all folks
\end{document}